\documentclass[12pt]{amsart}
\usepackage[utf8]{inputenc}
\usepackage[T1]{fontenc}

\usepackage{amsmath}
\usepackage{amssymb}
\usepackage{amsthm}
\usepackage{graphicx}
\usepackage{color, soul}
\usepackage{centernot}
\usepackage{verbatim}
\usepackage{multirow}
\usepackage{array}   
\newcolumntype{L}{>{$}l<{$}} 
\usepackage{setspace}
\usepackage{mathrsfs}
\usepackage{enumitem}
\usepackage{tikz}
\usepackage{bigints}
\usepackage{bm}
\usepackage{caption}
\usepackage{subcaption}
\usepackage{url}
\usepackage{imakeidx}
\makeindex[columns=2, intoc]
\usepackage{tikz-cd}

\makeatletter
\renewcommand{\@biblabel}[1]{[#1]\hfill}
\makeatother

\numberwithin{equation}{section}
\newtheorem{theorem}[equation]{Theorem}
\newtheorem{corollary}[equation]{Corollary}
\newtheorem{lemma}[equation]{Lemma}
\newtheorem{proposition}[equation]{Proposition}

\theoremstyle{definition}
\newtheorem{definition}[equation]{Definition}
\newtheorem*{notation}{Notation}
\newtheorem{remark}[equation]{Remark}
\newtheorem{example}[equation]{Example}

\usepackage{mathtools}
\mathtoolsset{showonlyrefs}

\newcommand{\annot}[1]{\text{(\small#1)}}

\newcommand{\der}{\text{d}}

\newcommand{\OO}{\mathcal{O}}

\newcommand{\A}{\mathbb{A}}
\newcommand{\CC}{\mathbb{C}}

\newcommand{\QQ}{\mathbb{Q}}

\newcommand{\MM}{\mathcal{M}}

\newcommand{\Proj}{\mathbb{P}}

\DeclareMathOperator{\ord}{ord}

\DeclareMathOperator{\im}{Im}

\DeclareMathOperator{\Gal}{Gal}

\DeclareMathOperator{\charac}{char}

\DeclareMathOperator{\Hom}{Hom}

\DeclareMathOperator{\Ext}{Ext}
\DeclareMathOperator{\EExt}{\mathbb{E}xt}
\DeclareMathOperator{\Spec}{Spec}
\DeclareMathOperator{\Spf}{Spf}
\DeclareMathOperator{\tame}{tame}
\DeclareMathOperator{\wild}{wild}

\DeclareMathOperator{\Gr}{Gr}

\DeclareFontFamily{U}{wncy}{}
\DeclareFontShape{U}{wncy}{m}{n}{<->wncyr10}{}
\DeclareSymbolFont{mcy}{U}{wncy}{m}{n}
\DeclareMathSymbol{\Sha}{\mathord}{mcy}{"58}

\renewcommand{\bar}{\overline}
\renewcommand{\tilde}{\widetilde}

\usepackage{geometry}
\usepackage{hyperref}

\title[Simply branched covers of curves]{Simply branched covers of curves and wild conductor exponents}
\author{Harry Spencer}

\address{University College, London, WC1H 0AY, UK}
\email{harry.spencer.22@ucl.ac.uk}

\begin{document}

\begin{abstract}
    We use deformation theory to show that a cover of smooth, projective, geometrically connected curves $\pi\colon C\to D$ over a $p$-adic field can be $p$-adically perturbed to obtain a nearby simply branched cover $\pi'\colon C'\to D$ and that, if $D=\Proj^1$ and $\pi^*\OO_{\Proj^1}(1)$ is very ample, then we may take $C'=C$. As an application, we give a formula for the wild conductor exponent of a curve at a prime $p>d$ in terms of the ramification data of any degree $d$ cover $C\to\Proj^1$.
\end{abstract}

\maketitle

\section{Introduction}

A cover of smooth, projective, geometrically connected curves $\pi\colon C\to D$ of degree $d$ over a field $k$ is \textit{simply branched}\footnote{Such a $\pi$ is sometimes said to be \textit{simple}, as in \cite{Fulton}. We avoid this terminology because it may alternatively mean that $k(C)/k(D)$ has no non-trivial intermediate field, as in \cite{RoeXarles}.} if each $Q\in D(\bar{k})$ is such that $\pi^{-1}(Q)$ contains at least $d-1$ geometric points. Equivalently, every geometric ramification point has ramification index $2$, and no fibre contains more than one ramification point.

Over $\CC$, Hurwitz \cite{Hurwitz} systematised the study of simply branched covers of the projective line by endowing the set of such covers of fixed degree from fixed genus curves with the structure of a complex manifold---the \textit{Hurwitz space}. Using the fact that every smooth, projective curve over $\CC$ admits simply branched covers of $\Proj^1$, Severi \cite{Severi} deduced the irreducibility of the moduli space $\MM_g$ of smooth genus $g$ curves over $\CC$.

Fulton's seminal work \cite{Fulton} modernised the study of Hurwitz spaces, constructing schemes $\mathcal{H}_{g,d}$ which parameterise simply branched covers of $\Proj^1$ of degree $d$ from genus $g$ curves over a fixed base scheme---a finite flat cover of curves over a scheme $S$ being simply branched if it is so on each geometric fibre. Over $\Spec k$ for $k$ algebraically closed, Fulton proved that $\mathcal{H}_{g,d}$ is connected for $d>2$ provided $\charac k = 0$ or $\charac k>d$ and deduced the irreducibility of $\MM_g$ in characteristic $p>g+1$. To do so, he showed that genus $g$ curves over algebraically closed fields admit simply branched covers of $\Proj^1$ of all degrees at least $g+1$ (\cite[Proposition 8.1]{Fulton}).

It is a standard principle that simple branching is generic; over $\CC$, an arbitrary cover of curves may be perturbed to a simply branched one. The aim of this article is to establish an arithmetic version of this fact over $p$-adic fields and to apply this to the study of conductors: recall that wild conductor exponents are arithmetic invariants appearing in the conjectural functional equation of a curve's $L$-function (see \cite[\S1 and \S3.1]{Spencer} for definitions and a review of the literature on the topic).

Recent work of the author gives a formula via which wild conductor exponents of curves can be read off from the branch locus of a simply branched cover of $\Proj^1$ (\cite[Theorem 1.1]{Spencer}), renewing arithmetic interest in the study of simple branching. Therein, the author applies \textit{ad hoc} $p$-adic perturbations (\cite[Lemma 6.4]{Spencer}\footnote{See relevant errata \href{https://hspen99.github.io/WildConductors_errata.pdf}{here}.}) to take a coordinate projection map $y\colon C\to\Proj^1$ on certain plane curves and obtain a $p$-adically close curve $\tilde{C}$ with coordinate projection a simply branched cover of $\Proj^1$. Using local constancy results, wild conductor exponents of such curves are thereby determined (\cite[Theorem 1.4]{Spencer}).

\subsection{Deformation-theoretic results}

Herein, we replace these explicit perturbations with deformation-theoretic techniques to generalise this approach to arbitrary covers of curves $C\to D$. In the case that $D=\Proj^1$, we will then use this to remove the simple branching hypothesis from the results of \cite{Spencer}. Some ingredients of the proof may be familiar to experts, but the extension to non-algebraically closed fields and use of $p$-adic approximation seem to be new to the literature. The following theorem provides the $p$-adic approximations required for conductor applications.

\begin{theorem}[{= Theorem \ref{thm:main}}]\label{thm:intro_main}
	Let $K/\QQ_p$ be a finite extension and consider a degree $d$ cover $\pi\colon C\to D$ of smooth, projective, geometrically connected curves over $K$. There exists a smooth, geometrically integral $K$-scheme $T$ with
	\begin{enumerate}
		\item a marked point $0\in T(K)$;
		\item a smooth proper morphism $q_T \colon \mathcal{C} \to T$ with geometrically connected fibres;
		\item a finite flat $T$-morphism $\Pi_T\colon \mathcal{C} \to D\times_K T$ of degree $d$ such that the fibre above $0$ is isomorphic to $\pi\colon C\to D$.
	\end{enumerate}
	Moreover, there is a Zariski-open subscheme $T'$ of $T$ consisting of simply branched covers such that every $p$-adic neighbourhood of $0\in T(K)$ contains a point of $T'(K)$.
\end{theorem}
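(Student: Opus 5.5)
We will build $T$ as a family of covers obtained by deforming a presentation of $\pi$, rather than as an open piece of a Hurwitz space. There are then three tasks: show the family is smooth and geometrically integral at $0$, show that simple branching is a nonempty Zariski-open condition on it, and transfer a $\bar K$-point of the open set to nearby $K$-points.

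\textbf{Construction of $T$.} Choose a very ample line bundle $M$ on $D$ and set $L=\pi^*M^{\otimes m}\otimes N$. Here $N$ is ample on $C$ and $m$ is large, chosen so that $L$ is very ample, $H^1(C,L)=0$, and the relevant pushforwards are globally generated. Then $\pi$ is the composite of the closed embedding
\[
C\hookrightarrow \Proj(\pi_*L)^\vee
\]
over $D$ with the projection to $D$. We regard $\pi$ as a point of a relative Hilbert scheme of curves inside a projective bundle $P\to D$. This bundle may be taken to be $\Proj(E)$ for a fixed locally free sheaf $E$, with $E\twoheadrightarrow \pi_*L$.

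Let $T$ be the open locus in this Hilbert scheme (or in the space of morphisms $C\to P$ over $D$) where the subscheme is a smooth curve, finite flat of degree $d$ over $D$, with geometrically connected fibres. If needed, take the connected component through $0$ and replace $T$ by a smooth open neighbourhood of $0$. Smoothness at $0$ follows from unobstructedness: the obstruction space is $H^1(C,N_{C/P})$, and via the relative Euler sequence this is a quotient of $H^1(C,\pi^*E^\vee\otimes L)$ plus a term from $H^1(C,T_\pi)$-type contributions. We arrange for all of these to vanish by taking $m\gg0$. Alternatively we take $T$ to be an open subset of an affine space of equations, which is automatically smooth and integral. Geometric integrality then follows from smoothness together with geometric connectedness of the component through the $K$-point $0$. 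The universal family gives $q_T$ and $\Pi_T$, and properness and flatness come from the Hilbert scheme.

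\textbf{Simple branching is open and nonempty.} Openness holds because the non-simply-branched locus is closed. Over the relative branch divisor $B\subset D\times T$, which is finite over $T$, the conditions ``some ramification index $\ge3$'' and ``two ramification points in one fibre'' are closed conditions on $T$, for instance as the support of a discriminant of the branch divisor. Nonemptiness is the main obstacle. Over $\bar K$ (characteristic $0$) we must produce, within the family, one simply branched cover. We do this with a tangent-space argument: since $H^1(C,L)=0$, the deformations surject onto local jets. So at each ramification point $P$ with local form $t\mapsto t^e$ we can realise the local perturbation $t^e+\varepsilon t$, which splits $P$ into $e-1$ simple ramification points. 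In characteristic $0$ the derivative $et^{e-1}+\varepsilon$ has distinct roots, and we move their branch values apart independently.

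The surjectivity of $H^0$ of the deformation sheaf onto the finite-length jet space at the finitely many ramification points is exactly what $H^1$-vanishing gives, after twisting by a sufficiently negative divisor supported at those points, which is absorbed by taking $m$ large. Hence the generic member near $0$ is simply branched, so $T'\neq\emptyset$. Because $T$ is irreducible, $T'$ is dense.

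\textbf{$p$-adic density.} $T$ is smooth over $K$ at $0\in T(K)$, so by the $K$-analytic implicit function theorem a $p$-adic neighbourhood $U$ of $0$ in $T(K)$ is homeomorphic to an open ball in $K^{\dim T}$. The complement $T\setminus T'$ is a proper Zariski-closed subset of an irreducible variety, so it is locally cut out by a nonzero function $f$. A nonzero analytic function on a ball cannot vanish identically on any open subset: it is given by a convergent power series that is not identically zero, so its zero set has empty interior. Therefore every neighbourhood $U$ contains a point of $T'(K)$.
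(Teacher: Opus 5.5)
Your route is genuinely different from the paper's, and it is viable, but the step that carries the theorem is asserted rather than proved, and some of the cohomology bookkeeping needs correcting.

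\textbf{Comparison of routes.} The paper never builds a global parameter space. It picks a $K$-rational first-order deformation in $\EExt^1(L^\bullet_{C/D},\OO_C)$ avoiding the hyperplanes of Lemma \ref{lem:good_direction}, lifts it order by order (unobstructed because $\EExt^2=0$), and algebraises over $K[[t]]$. It then uses Greenberg approximation to land on a curve $T$ étale over $\A^1_K$. Your Hilbert-scheme $T$ avoids algebraisation and approximation. It also only needs $T'_{\bar{K}}\neq\emptyset$, because $p$-adic density near $0$ then follows from irreducibility and your analytic argument. So you do not need a $K$-rational good direction (the Galois-descent part of Lemma \ref{lem:good_direction}).

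\textbf{Corrections to the cohomology.} The cost of your route is an auxiliary embedding plus cohomology vanishing. Take $E$ trivial, so that $C\subset P=D\times\Proj^r$ via $(\pi,\varphi_L)$.
\begin{itemize}
\item The term $H^1(C,\pi^*T_D)$ does not depend on $m$; it contains $H^1(D,T_D)\neq 0$ once $g(D)\ge1$. So it cannot be ``arranged to vanish''. It is harmless for a different reason: $H^1(T_C)\to H^1(\pi^*T_D)$ is onto, since the cokernel of $d\pi$ is torsion. With that, $H^1(C,L)=0$ gives $H^1(N_{C/P})=0$.
\item Jet surjectivity comes from the sequence
\[
0\to\varphi_L^*T_{\Proj^r}\to N_{C/P}\to \pi^*T_D/d\pi(T_C)\to0,
\]
whose right-hand term is the torsion sheaf with stalk $\OO_{C,a_i}/(x^{e_i-1})$ at each $a_i$ (over $\bar{K}$). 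So $H^1(C,L)=0$ already suffices, and no twisting is needed.
\item Discard the ``morphisms $C\to P$ over $D$'' and ``affine space of equations'' alternatives. With a fixed source compatible with $\pi$, nothing moves. Fixed-source deformations are in general too few; compare the very-ampleness hypothesis needed in Section 3.
\end{itemize}

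\textbf{The gap.} The unproved step is ``hence the generic member near $0$ is simply branched''. A tangent vector $v\in H^0(N_{C/P})$ fixes the local perturbation only to first order. Along any arc in $T_{\bar{K}}$ tangent to $v$ (such arcs exist by smoothness at $0$), the germ at $a_i$ is
\[
u=x^{e_i}+s(\lambda_i+\mu_ix+\cdots)+s^2H_i(x,s),
\]
with $H_i$ uncontrolled. Your computation with $ex^{e-1}+\varepsilon$ says nothing about such a germ. To close the gap you need three things.
\begin{itemize}
\item[(i)] Choose a single $v$ with $\mu_i\neq0$ whenever $e_i\ge3$, and $\lambda_i\neq\lambda_j$ for distinct $a_i,a_j$ over a common branch point, measured in a common target coordinate $u$. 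Within a cluster the branch values differ only at order $s^{e/(e-1)}$, so clusters over the same point are separated only by their constant terms.
\item[(ii)] Prove that such a germ has $e_i-1$ simple critical points with distinct critical values $\lambda_is-(e_i-1)y_j^{e_i}s^{e_i/(e_i-1)}+\cdots$, where $e_iy_j^{e_i-1}+\mu_i=0$. This needs Weierstrass preparation together with a Newton-polygon/Hensel argument showing the $s^2H_i$ terms do not interfere; it is exactly Lemma \ref{lem:local_splitting}.
\item[(iii)] Exclude new ramification away from the $a_i$. For example, the relative ramification divisor is finite over $\bar{K}[[s]]$, so its generic points specialise to the $a_i$, where the Weierstrass polynomial has degree $e_i-1$. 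Alternatively, use Riemann--Hurwitz as the paper does.
\end{itemize}
With these supplied, and the corrections above, your argument goes through.
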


The proof strategy is inspired by \cite{Schroer}, although we extend those techniques to work over non-algebraically closed fields. First, we lift $\pi$ to a cover $\Pi_1\colon \mathcal{C}_1\to D\times_{K} K[t]/(t^2)$ in a way that separates the ramification points, before lifting iteratively through $K[t]/(t^{n+1})$ and applying Grothendieck algebraisation \cite[Th{\'e}or{\`e}me 5.4.1, Th{\'e}or{\`e}me 5.4.5]{EGA_III} to obtain a morphism $\tilde{\Pi}\colon \mathcal{X}\to D\times_K{K[[t]]}$ with special fibre $\pi$ and simply branched generic fibre. To rigorously ``plug in'' values of $t$ with high valuation, we descend to a finite type family of curves and apply an approximation theorem due to Greenberg \cite[Corollary 1]{Greenberg}. We note that an alternative proof strategy using the theory of stacks is likely viable, but we do not pursue this here. 

A natural question that arises from Theorem \ref{thm:intro_main} is: given $\pi\colon C\to \Proj^1_K$, can we perturb $\pi$ to obtain a simply branched cover $\tilde{\pi}\colon C\to \Proj^1_K$? We also give the following sufficient condition for a positive answer to this question.

\begin{proposition}[{= Proposition \ref{prop:fixed_source}}]
    Let $C$ be a smooth, projective, geometrically connected curve of positive genus over a finite extension $K/\QQ_p$, and let $\pi\colon C\to\Proj^1_K$ be a degree $d$ cover of curves. Write $L=\pi^*\OO_{\Proj^1}(1)$ and let
    \[
        V_0=\pi^*H^0(\Proj^1_K,\OO_{\Proj^1}(1))
        \subset H^0(C,L)
    \]
    be the base-point free pencil defining $\pi$. If $L$ is very ample, then every $p$-adic neighbourhood of $V_0$ in $\Gr\bigl(2,H^0(C,L)\bigr)(K)$ contains a base-point free pencil $V$ such that the induced cover $\tilde\pi\colon C\to\Proj(V^\vee)\cong \Proj^1_K$ is simply branched of degree $d$.
\end{proposition}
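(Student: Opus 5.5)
We will argue geometrically: since the conclusion asks for a nearby pencil inside the \emph{fixed} space $H^0(C,L)$, we do not need the deformation machinery of Theorem \ref{thm:intro_main}. Instead we show that simply branched, base-point free pencils form a nonempty Zariski-open subset $U$ of the smooth irreducible $K$-variety $G=\Gr\bigl(2,H^0(C,L)\bigr)$. For such a $U$, the $p$-adic density of $U(K)$ in $G(K)$ is standard. Indeed, $G$ is covered by affine spaces $\A^N_K$, and a nonempty Zariski-open subset of $\A^N_K$ meets every $p$-adic ball in $K^N$, because a nonzero polynomial cannot vanish identically on a ball. Note that any $V\in U$ automatically gives a cover of degree $\deg L=d$, since $V$ is base-point free. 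So the content is entirely in establishing that $U$ is open and nonempty.

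\textbf{Openness.} Let $\mathcal{V}\subset H^0(C,L)\otimes\OO_G$ be the tautological rank-$2$ subbundle. Base points form the closed subset $\{(V,x): V \text{ vanishes at } x\}\subset G\times C$, and its image in $G$ is closed because $C$ is proper. On the complement $G^{\mathrm{bpf}}$ we obtain a family of finite flat degree $d$ covers $\mathcal{C}\to\Proj(\mathcal{V}^\vee)$ over $G^{\mathrm{bpf}}$. The locus where the cover fails to be simply branched is the locus where some fibre has at most $d-2$ geometric points. This is closed: it is the image under a proper map of a closed condition on the relative ramification scheme, namely either a point with $e\ge3$, or two distinct ramification points in one fibre, described as a closed subset of the fibred product of the ramification divisor with itself minus the diagonal, together with its closure. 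Alternatively, the locus is the closed condition that the discriminant (branch) divisor, of constant degree $2g-2+2d$, fails to be reduced. We will use the discriminant formulation, since reducedness of a family of divisors of fixed degree is an open condition.

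\textbf{Nonemptiness.} This is the main step, and it may be checked over $\bar K$. Embed $C\hookrightarrow\Proj^n=\Proj(H^0(C,L)^\vee)$ by the very ample $L$. A pencil $V$ corresponds to a codimension-$2$ linear subspace $\Lambda\subset\Proj^n$, the centre of projection, and $\tilde\pi$ is projection from $\Lambda$. We take $\Lambda$ general, so that it is disjoint from $C$, and use the classical fact in characteristic $0$ that a general projection of a smooth nondegenerate curve to $\Proj^1$ is simply branched. The hyperplanes through $\Lambda$ form a general line $\ell$ in the dual space $\check{\Proj}^n$. Branching of $\tilde\pi$ at $H\in\ell$ means that $H$ is tangent to $C$, so the branch points are $\ell\cap C^\vee$, where $C^\vee$ is the dual hypersurface. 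In characteristic $0$, reflexivity implies that the general tangent hyperplane is tangent at exactly one point and to order exactly $2$. The hyperplanes that are tangent at two points or osculating form loci of codimension $\ge 1$ in $C^\vee$, hence of codimension $\ge2$ in $\check{\Proj}^n$. A general line $\ell$ avoids these loci and meets $C^\vee$ transversally, which gives simple branching. The obstacle here is justifying the dimension counts for bitangent and osculating hyperplanes. For this we use that $C$ is not a line, so the Gauss map is finite; positive genus ensures $C$ is not a line or other degenerate case, and characteristic $0$ rules out strange curves and the failure of reflexivity. As a fallback, one could instead invoke \cite[Proposition 8.1]{Fulton}-style arguments, or count dimensions directly on the incidence correspondence $\{(x,H): H \text{ meets } C \text{ at } x \text{ to order}\ge3\}$, which has dimension $1+(n-3)=n-2$.

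Finally, $U$ is defined over $K$ and is nonempty over $\bar K$, so it is a nonempty open subset of $G$. Applying the density argument of the first paragraph inside a $p$-adic neighbourhood of $V_0$ then completes the proof.
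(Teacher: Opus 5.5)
Your proposal is correct and has the same structure as the paper's proof. You show that base-point-free, simply branched pencils form a Zariski-open subset of $\Gr\bigl(2,H^0(C,L)\bigr)$ that is non-empty over $\bar K$ (via a general projection of the curve embedded by $L$), and then use that a non-zero polynomial cannot vanish on a $p$-adic ball in an affine chart around $V_0$. The one real difference is the non-emptiness step: the paper cites the existence of Lefschetz pencils (SGA 7, XVII, Th\'eor\`eme 2.5), whereas you argue directly through the dual hypersurface $C^\vee$ and reflexivity in characteristic $0$, which is essentially that theorem's proof specialised to curves.
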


\subsection{Application to conductors}

We now explain how Theorem \ref{thm:intro_main} can be used to remove the simple branching hypothesis from the results of \cite{Spencer}.

\begin{notation}
    Given a cover $\pi\colon C\to \Proj^1_K$ of curves over a finite extension $K/\QQ_p$, write 
    \[ w_K(\pi)=\sum_{b\in B}\sum_{r\in \pi^{-1}(b)} \bigl(e_r-1\bigr)\cdot \bigl(v_K(\Delta_{K(b)/K}) -[K(b):K] +f_{K(b)/K}\bigr), \]
    where $B$ is a set of representatives for $\Gal(\bar{K}/K)$-orbits of branch points of $\pi$, $e_r$ is the ramification index of a point $r$ above $b\in B$, and $v_K(\Delta_{K(b)/K})$, $f_{K(b)/K}$ are the normalised valuation of the discriminant and residue degree of $K(b)/K$, respectively. 
\end{notation}

\begin{corollary}\label{cor:conductors}
	Let $K/\QQ_p$ be a finite extension and consider a smooth, projective, geometrically connected curve $C/K$. Fix a cover $\pi\colon C\to \Proj^1_K$ over $K$ such that $\infty\in\Proj^1_K$ is not a branch point. If $p>\deg \pi$, then the wild conductor exponent of $C$ is 
    \[n_{\wild}(C)=w_K(\pi).\]
\end{corollary}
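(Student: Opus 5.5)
The plan is to reduce to the simply branched case, where \cite[Theorem 1.1]{Spencer} gives $n_{\wild}(C)=w_K(\pi)$ directly, by perturbing $\pi$ with Theorem \ref{thm:intro_main} and controlling both sides of the formula under the perturbation. Apply Theorem \ref{thm:intro_main} with $D=\Proj^1_K$ to get the family $\Pi_T\colon\mathcal{C}\to\Proj^1_T$ over $T$ with fibre $\pi$ above $0$, and the open $T'$ of simply branched covers whose $K$-points accumulate at $0$ $p$-adically. For $t\in T'(K)$ close to $0$ we get a simply branched $\pi_t\colon C_t\to\Proj^1_K$ of degree $d<p$. Since $\infty$ is not a branch point of $\pi$ and non-branching is an open condition on $T$, we may also assume $\infty$ is not a branch point of $\pi_t$. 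Then \cite[Theorem 1.1]{Spencer} gives $n_{\wild}(C_t)=w_K(\pi_t)$.

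Two continuity statements then remain. \textbf{(a)} $n_{\wild}(C_t)=n_{\wild}(C)$ for $t$ sufficiently $p$-adically close to $0$. This uses the local constancy of conductors in smooth proper families, as in \cite{Spencer}: the Galois representations on $H^1_{\text{ét}}$ of nearby fibres agree on inertia, which can be seen via Kisin's or Helm's local constancy, or via a Krasner-type argument on the étale cohomology of $\mathcal{C}\to T$. We may cite the local constancy results already used in \cite{Spencer}.

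\textbf{(b)} $w_K(\pi_t)=w_K(\pi)$ for $t$ close to $0$. This is the main obstacle, because $w_K$ depends on the ramification structure, and the perturbation splits each ramification point of $\pi$ into several simple ones. The approach is to write $w_K(\pi)$ in terms of the branch divisor. Let $\Delta_\pi\in K[x]$ be the discriminant of $\pi$, i.e.\ the norm of the different, which is a polynomial of degree $2g-2+2d$ since $\infty$ is unbranched. Its roots are the branch points $b$, with multiplicity $\sum_{r\mapsto b}(e_r-1)$, because $p>d$ makes all ramification tame so that the different exponent is $e_r-1$. Then the quantity $v_K(\Delta_{K(b)/K})-[K(b):K]+f_{K(b)/K}$ is the wild part of the conductor of the Galois set $\{b\}$, i.e.\ of the permutation representation $K[\mathrm{Gal\text{-}orbit}]$. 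So $w_K(\pi)$ is the wild conductor exponent of the Galois module $\bigoplus_b \QQ_\ell[\text{roots of }\Delta_\pi]$ counted with multiplicity. As $t\to 0$, the branch divisor of $\pi_t$ is a divisor on $\Proj^1$ that varies continuously, since its coefficients are polynomial in the family, and it converges to that of $\pi$. By Krasner's lemma, each cluster of roots of $\Delta_{\pi_t}$ near a root $b$ of $\Delta_\pi$ of multiplicity $m$ consists of points whose fields contain $K(b)$.

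The key point for (b) is to show that the wild part of each such cluster equals $m$ times the wild part of $\{b\}$. Since the cluster's fields could be tamely ramified over $K(b)$, I would use that the wild conductor is insensitive to tame extensions: for $L/K(b)/K$ with $L/K(b)$ tame, the wild part of $\mathrm{Ind}_L^K 1$ equals $[L:K(b)]$ times that of $\mathrm{Ind}_{K(b)}^K 1$. This holds because wild inertia of $K(b)$ acts trivially on $\mathrm{Ind}_L^{K(b)}1$, up to semisimplification of the wild part, which is all Swan conductors see. The total count over the cluster is $m$, giving $w_K(\pi_t)=w_K(\pi)$. Combining (a), (b), and \cite[Theorem 1.1]{Spencer} yields the corollary. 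The delicate step is checking that the cluster roots really lie in tame extensions of $K(b)$ and that the multiplicity bookkeeping matches. I expect this to follow from $m\le 2g-2+2d$ together with the tameness of the ramification of $\pi_t$ (as $p>d$) in a local model near $b$, but this is the point requiring the most care.
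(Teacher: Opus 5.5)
Your outline matches the paper's proof. You perturb with Theorem \ref{thm:intro_main}, keep $\infty$ unbranched by openness, use Kisin's local constancy for $n_{\wild}$, and apply the simply branched case from \cite{Spencer}. (In the paper that case is \cite[Theorem 1.1]{Spencer} combined with \cite[Theorem 11.3]{M2D2}, not Theorem 1.1 alone.) The paper gets $w_K(\pi_t)=w_K(\pi)$ by citing \cite[Lemma 1.11]{Spencer}; you try to prove it directly.

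Your reduction for that step is sound: the Swan conductor of the permutation module, Krasner giving $K(b)\subseteq K(\beta)$, and the formula $\mathrm{Sw}_K(\mathrm{Ind}_{K(b)}^K\rho)=f_{K(b)/K}\,\mathrm{Sw}_{K(b)}(\rho)+\dim\rho\cdot\mathrm{Sw}_K(\mathrm{Ind}_{K(b)}^K 1)$. But it stops exactly where $p>\deg\pi$ must be used, and the justification you suggest does not supply it. The bound $m\le 2g-2+2d$ is the degree of the whole branch divisor and is typically $\ge p$. Tameness of the geometric ramification of $\pi_t$ is automatic since $\charac K=0$, so it says nothing about the extension $K(\beta)/K(b)$. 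Likewise the different exponent $e_r-1$ needs no hypothesis on $p$. As written, your argument never genuinely uses $p>\deg\pi$, and the cluster claim is false without such input. For $b=0\in\Proj^1(\QQ_p)$, the $p$ roots of $x^p-p^{pk+1}$ cluster near $0$ and form one wildly ramified Galois orbit with Swan contribution $p>0$, whereas $\{0\}$ contributes $0$.

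The missing idea is to bound the size of each cluster, not the total degree. For a branch point $b$,
\[
m_b=\sum_{r\mapsto b}(e_r-1)=d-\#\pi^{-1}(b)(\bar K)\le d-1<p.
\]
For $t\in T'(K)$ close to $0$, the cluster near $b$ consists of exactly $m_b$ distinct branch points of $\pi_t$. The group $G_{K(b)}$ maps this cluster to itself, since $|\sigma\beta-b|=|\sigma\beta-\sigma b|=|\beta-b|$ for $\sigma\in G_{K(b)}$. Hence $[K(\beta):K(b)]\le d-1<p$, so $e(K(\beta)/K(b))$ is prime to $p$ and $K(\beta)/K(b)$ is tame. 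Each $G_K$-orbit of branch points of $\pi_t$ near the orbit of $b$ meets the cluster at $b$ in a single $G_{K(b)}$-orbit. With these two facts, your bookkeeping gives $w_K(\pi_t)=w_K(\pi)$ and the proof closes.
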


\begin{proof}
    If $\pi$ is simply branched, then \cite[Theorem 1.1]{Spencer} implies that $C$ and a hyperelliptic curve branched over the same locus have the same wild conductor exponents. The conclusion then follows from \cite[Theorem 11.3]{M2D2}.
    
    To reduce to this case, apply Theorem \ref{thm:intro_main} to obtain $q_T\colon \mathcal{C}\to T$ and $\Pi_T\colon \mathcal{C} \to \Proj^1_K\times_K T$ as in the statement of that result. Consider the weighted branch divisor of $\Pi_T$, whose fibres are
    \[\sum_{s\in\Proj^1_{\bar{K}}}\sum_{Q\in \Pi_{T,t}^{-1}(s)}(e_Q-1)\cdot[s] =: B_t.\] 
    This divisor is the pushforward along $\Pi_T$ of the degree $N=2g(C)-2+2\deg\pi$ relative ramification divisor cut out by $\der\Pi_T$. After shrinking $T$ around $0$, it is a relative effective divisor on $\Proj^1_K\times_K T$, corresponding to a morphism $T\to\operatorname{Sym}^N(\Proj^1_K)\cong \Proj^N_K$, $t\mapsto B_t$, which is $p$-adically continuous on $K$-points. Because $\infty$ is not a branch point of $\pi$, we can therefore shrink $T$ to a neighbourhood of $0$ for which $\infty$ is unbranched for all $t$ therein.

    By \cite[Theorem 5.1(1)]{Kisin}, wild conductor exponents are locally constant in the smooth proper family $q_T\colon \mathcal{C} \to T$. That is to say, there is a $p$-adic neighbourhood of $0\in T(K)$ on which the wild conductor exponent is constant on fibres of $q_T$. Thereby fix a neighbourhood $U$ of $0\in T(K)$ on which $n_{\wild}(\mathcal{C}_t)=n_{\wild}(C)$ for all $t\in U$.

   By shrinking $U$ as necessary, we can ensure that $w_K(\Pi_{T,t})=w_K(\pi)$ for all $t\in U\cap T'(K)$ by \cite[Lemma 1.11]{Spencer}, using that $p>\deg\pi$. Choosing $t\in U\cap T'(K)$ by Theorem \ref{thm:intro_main}, we have
    \[
    \renewcommand{\arraystretch}{1.5}
    \begin{array}{r@{\;}c@{\;}l@{\qquad\qquad\qquad}l}
        n_{\wild}(C) &=& n_{\wild}(\mathcal{C}_t)
        & \annot{Constancy of $n_{\wild}$ for $t\in U$} \\
    &=& w_K(\Pi_{T,t})
        & \annot{Simply branched case} \\
        & = & w_K(\pi)
        & \annot{Constancy of $w_K$ for $t\in U\cap T'(K)$},
    \end{array}
    \]
    completing the proof.
\end{proof}

\begin{remark}
    That $\infty$ is not a branch point is not a restriction on the cover $\pi$; this can always be arranged by composing $\pi$ with a M{\"o}bius transformation if necessary.
\end{remark}

\begin{remark}
    \textit{A priori}, wild conductor exponents are easy to compute for $p>2g(C)+1$ (when they vanish), but are otherwise difficult to provably determine. Corollary \ref{cor:conductors} gives a simple method to compute wild conductor exponents of curves over number fields $k$ at primes with residue characteristic greater than the rational gonality, $\gamma_{\text{rat}}$: the minimal degree of a cover $C\to\Proj^1_k$ defined over the ground field. Moreover, by \cite[Lemma 3.4]{Spencer}, we can recover wild conductor exponents of curves at primes $p$ after applying Corollary \ref{cor:conductors} over a tame extension, so we also consider the corresponding tame analogue of rational gonality, $\gamma_{\tame}$.

    For a $p$-adic field $K$ with $p\ge g(C)+1$, we can pass to a tame extension over which $C$ has index $1$ provided $g(C)\ge 2$. By Riemann--Roch, if $d>g(C)$ is divisible by the index, then $C$ admits a map to $\Proj^1_K$ of degree at most $d$, so $\gamma_{\tame} \le g+1$. Moreover, if $C$ admits a unique degree $\gamma$ map to $\Proj^1_{\bar{k}}$ (for example, when $(\gamma-1)^2<g(C)$ and the map is simple by \cite[Theorem 2]{RoeXarles}), then \cite[Theorem 1]{RoeXarles} gives $\gamma=\gamma_{\text{rat}}$ if $C$ has an odd-degree $k$-rational divisor. This uniqueness hypothesis is actually generic: by \cite[Proposition 2.4 and Theorem 2.6]{ArbarelloCornalba}, the generic curve of genus $g$ and gonality $d$ has a unique map to $\Proj^1_{\bar{k}}$ if $2\leq d<\lfloor (g+3)/2\rfloor$. 

    Corollary \ref{cor:conductors} can then be applied to compute conductor exponents for positive genus curves over a number field at primes $p>\gamma$ provided that there is a unique degree $\gamma$ cover of $\Proj^1_{\bar{k}}$. Indeed, \cite[Theorem 1]{RoeXarles} also shows that the unique gonal map descends to a map of degree $\gamma$ from $C$ to a curve of genus $0$ over $k$, which is isomorphic to $\Proj^1$ after a quadratic, hence tame (as $p>\gamma\ge2$), extension.
\end{remark}

\begin{example}
    Consider the genus $12$ complete intersection curve $C$ in $\Proj^1_{\QQ_5}\times \Proj^2_{\QQ_5}$, given in terms of the coordinates $[S:T]$ on $\Proj^1$ and $[U:V:W]$ on $\Proj^2$ by
    \[
    \left\{\begin{array}{l}
    U^2-2V^2-5W^2=0,\\[2mm]
    S^5W^2+(T^5-5S^5)
    \bigl(U^2+UV+2UW+3V^2+4VW+6W^2\bigr)=0.
    \end{array}
    \right.
    \]
    The cover $\pi\colon C\to \Proj^1_{\QQ_5}$ defined by projection onto the first factor,
    \[
    ([S:T],[U:V:W])\mapsto [S:T],
    \]
    has degree $4$. Using Magma \cite{Magma}, we find that---in the affine coordinate $x=T/S$---the geometric branch points with multiplicity
    $\sum_{r\in \pi^{-1}(b)} \bigl(e_r-1\bigr)$ are the roots of
    \[
    \begin{aligned}
        {}&
        (x^5-5)^2\cdot
        \bigl(
        4425167(x^5-5)^4
        +634792(x^5-5)^3
        +90400(x^5-5)^2\\
        &\qquad
        +25152(x^5-5)+1472\bigr).
    \end{aligned}
    \]
    Over $\QQ_5$, the degree $20$ factor splits as two degree $10$ factors and Corollary \ref{cor:conductors} gives: 
    \[
    n_{\wild}(C)
    =
    2\cdot(9-5+1) + (15-10+1)+(10-10+2)
    =
    18.
    \]
    Note that $\pi$ is non-simply branched above the roots of $x^5-5$. We now explain why $\pi$ is the gonal map of $C$, which shows that we cannot determine $n_{\wild}(C)$ directly from the results of \cite{Spencer}.

    After base change to $\overline{\QQ}_5$, the conic $U^2-2V^2-5W^2=0$ is isomorphic to $\Proj^1_{\overline{\QQ}_5}$ and $C_{\overline{\QQ}_5}$ has bidegree $(5,4)$ on $\Proj^1\times\Proj^1$, so projection onto the second factor has degree $5$. If $C_{\overline{\QQ}_5}$ were to admit a map to $\Proj^1_{\overline{\QQ}_5}$ of degree $d\le3$, then this map and the degree $5$ projection would be independent because $\gcd(d,5)=1$. The Castelnuovo--Severi inequality (for example, \cite[Theorem 3.11.3]{Stichtenoth}) would then give $12\leq 4(d-1)$, a contradiction. Therefore, $C$ has gonality $4$. 

    If $\pi$ were to factor through a degree $2$ cover $\pi'\colon C_{\overline{\QQ}_5}\to D$, then $\pi'$ and the degree $5$ projection are independent, so the Castelnuovo--Severi inequality gives $12\leq 2g(D)+4$, and hence $g(D)\geq4$. By Riemann--Hurwitz, $D\to\Proj^1$ therefore has at least $2g(D)+2\geq10$ branch points, each of which has multiplicity at least $2$ in the branch divisor of $\pi$, whereas the displayed polynomial has only five such roots. This is a contradiction, so $\pi$ is simple. Further, because $(4-1)^2<12$, $\pi$ is the unique gonal map of $C$ by \cite[Theorem 2]{RoeXarles}.
\end{example}

\subsection*{Acknowledgements \& Declarations}
I thank Vladimir Dokchitser for his supervision and Andrew Obus for reading a draft of this work. 

OpenAI's ChatGPT suggested approaches to some technical lemmata herein, carried out literature searches and was used for some parts of the typesetting and proofreading process. However, all mathematical claims are verified by the author and this work is entirely written by the author.

This work was supported by the Engineering and Physical Sciences Research Council [EP/S021590/1], the EPSRC Centre for Doctoral Training in Geometry and Number Theory (The London School of Geometry and Number Theory), University College, London.

\section{Deforming covers}

Plainly, the aim of this section is to perturb a cover $\pi\colon C \to D$ to obtain a ``nearby'' simply branched cover $\tilde{\pi}\colon \tilde{C} \to D$. To do so, we will use deformation theory, combining ideas from \cite{Schroer} with an approximation theorem due to Greenberg \cite[Corollary 1]{Greenberg}.

\subsection{Deformation theory}

We begin by recalling some facts about deformations of covers of curves.

\begin{definition}
    Given a finite flat, generically {\'e}tale cover $\pi\colon C\to D$ of curves, over a field $k$ and a local Artinian $k$-algebra $A$ with fixed residue map $A\to k$, a \textit{deformation} of $\pi$ over $A$ is a pair $(\mathcal{C},f)$ with $\mathcal{C}$ a curve over $A$ equipped with a finite flat morphism $f\colon \mathcal{C}\to D\times_k A$ of degree $d$ such that $f \times_A k \cong \pi$.
\end{definition}

To prove Theorem \ref{thm:intro_main}, we need to implement a strategy of Schr{\"o}er used in \cite{Schroer} to take a cover $\pi\colon C\to D$, obtain a deformation over $k[t]/(t^2)$ and successively lift to construct a formal deformation over $k[[t]]$. To do so, we will need the following technical results.

\begin{notation}
    For a cover of curves $\pi\colon C\to D$, we write $L^\bullet_{C/D}$ for the cotangent complex of $\pi$, in the sense of Illusie \cite[Chapter II, Section 1]{Illusie}. By $\EExt^i(L^\bullet_{C/D},\OO_C)$ we denote the $i$-th hyper-Ext group.
\end{notation}

\begin{proposition}\label{prop:eext1}
    Let $k$ be a field, $A$ a local Artinian $k$-algebra, $I\triangleleft A$ an ideal satisfying $\mathfrak{m}_A\cdot I=0$ and $\pi\colon C\to D$ a finite flat, generically {\'e}tale cover of curves over $k$.
    For a deformation $(\mathcal{C},f)$ of $\pi$ over $A/I$, if $\EExt^2(L^\bullet_{C/D},\OO_C)=0$, then the set of isomorphism classes of deformations $(\mathcal{C}',f')$ of $\pi$ over $A$ endowed with an isomorphism $f'\otimes_{A}A/I\cong f$ is a torsor under $\EExt^1(L^\bullet_{C/D},\OO_C)\otimes_k I$.
\end{proposition}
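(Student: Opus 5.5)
This is Illusie's deformation theory for morphisms with target fixed (\cite[Chapter III]{Illusie}), and I would deduce it from that general theory rather than build it by hand. The plan is to identify deformations $(\mathcal{C}',f')$ of $f$ over $A$ with flat deformations of $\mathcal{C}$ as a scheme over the fixed base $D_A:=D\times_k A$. Such a deformation restricts over $A/I$ to the given $f\colon\mathcal{C}\to D_{A/I}$.

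First I will set up this identification. Since $\pi$ is finite flat, a finite flat $f'\colon\mathcal{C}'\to D_A$ lifting $f$ is the same thing as a flat $D_A$-scheme $\mathcal{C}'$ together with an isomorphism $\mathcal{C}'\times_{D_A}D_{A/I}\cong\mathcal{C}$. Flatness over $D_A$ follows from flatness over $A$ by the local criterion of flatness, because the restriction $f$ is flat. Finiteness and degree lift automatically along the nilpotent thickening.

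Next I will apply Illusie's theorem \cite[III, Th\'eor\`eme 2.1.7]{Illusie} to the morphism $f$ and the square-zero extension $D_{A/I}\hookrightarrow D_A$, whose ideal is $I\otimes_k\OO_D$ since $D$ is flat over $k$ and $\mathfrak m_A I=0$. That theorem gives the following. There is an obstruction in $\Ext^2(L^\bullet_{\mathcal{C}/D_{A/I}},f^*(I\otimes\OO_D))$. When the obstruction vanishes, the lifts form a torsor under $\Ext^1(L^\bullet_{\mathcal{C}/D_{A/I}},f^*(I\otimes\OO_D))$. Automorphisms are controlled by $\Ext^0$.

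Then I need to reduce these groups to the special fibre. Since $\mathfrak m_A I=0$, the module $f^*(I\otimes\OO_D)=\OO_{\mathcal{C}}\otimes_k I$ is really an $\OO_C$-module, namely $\OO_C\otimes_k I$ pushed forward along $C\hookrightarrow\mathcal{C}$. By adjunction and the base change property of the cotangent complex under flat base change ($\mathcal C$ is flat over $D_{A/I}$, and $C=\mathcal C\times_{D_{A/I}}D$), we get $L^\bullet_{\mathcal{C}/D_{A/I}}\otimes^{\mathbb L}\OO_C\simeq L^\bullet_{C/D}$. Therefore
\[
\Ext^i(L^\bullet_{\mathcal{C}/D_{A/I}},\OO_C\otimes_kI)\cong\EExt^i(L^\bullet_{C/D},\OO_C)\otimes_k I .
\]
The tensor factor can be pulled out because $I$ is a finite-dimensional $k$-vector space. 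With the hypothesis $\EExt^2=0$, the obstruction vanishes, so lifts exist and the set of lifts is a torsor under $\EExt^1(L^\bullet_{C/D},\OO_C)\otimes_kI$.

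The main obstacle is the final step: passing from lifts, which are deformations with a chosen identification, to isomorphism classes of pairs $(\mathcal{C}',f')$ with a fixed isomorphism $f'\otimes A/I\cong f$. I must check that the action is still free and transitive after quotienting by isomorphisms. Because the identification with $f$ is part of the data, an isomorphism of two such pairs must restrict to the identity modulo $I$. Illusie's torsor statement is already formulated for isomorphism classes of such lifts, with automorphisms inducing the identity accounted for by $\Ext^0$ (infinitesimal automorphisms, which act trivially on classes). So I will just verify that the paper's notion matches Illusie's "isomorphism classes of deformations of $\mathcal C$ over $D_A$ restricting to $\mathcal C$". For generically étale $\pi$ one even has $\EExt^0(L^\bullet_{C/D},\OO_C)=\Hom(\Omega_{C/D},\OO_C)=0$, since $\Omega_{C/D}$ is torsion, so no subtlety arises.
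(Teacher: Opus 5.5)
Your proposal is correct and takes essentially the same route as the paper. Both arguments reduce to Illusie's theory of flat deformations over the square-zero thickening $D_{A/I}\hookrightarrow D_A$. Finiteness lifts along the nilpotent thickening, and flatness over $D_A$ is recovered: you use the local flatness criterion, while the paper cites Illusie's Corollaire III.2.1.3.3. You also spell out a step the paper leaves implicit, namely $\Ext^i(L^\bullet_{\mathcal{C}/D_{A/I}},\OO_C\otimes_k I)\cong\EExt^i(L^\bullet_{C/D},\OO_C)\otimes_k I$, obtained from flat base change of the cotangent complex together with adjunction along $C\hookrightarrow\mathcal{C}$.
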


\begin{proof}
    It is shown in \cite[Chapter III, Proposition 2.1.2.3]{Illusie} that the vanishing of a canonical class in $\EExt^2(L^\bullet_{C/D},\OO_C)\otimes_k I$ determines whether $f$ can be extended to such an $f'$, without assuming that $f'$ is finite flat. Finiteness of $f'$ follows from $f$ being finite, whilst \cite[Chapter III, Corollaire 2.1.3.3]{Illusie} ensures flatness. When this canonical class vanishes, which is automatically the case when $\EExt^2(L^\bullet_{C/D},\OO_C)=0$, that same proposition shows the claimed conclusion.
\end{proof}

Illusie also gives a necessary and sufficient condition, which always holds for covers of smooth curves, for the existence of lifted deformations as in Proposition \ref{prop:eext1}.

\begin{proposition}\label{prop:def_lift}
	Let $k$ be a field and $A$ a local Artinian $k$-algebra. Suppose that $I\triangleleft A$ is an ideal satisfying $\mathfrak{m}_A\cdot I=0$ and that $f\colon \mathcal{C}\to D\otimes A/I$ is a deformation over $A/I$ of a cover of smooth curves $\pi\colon C\to D$ over $k$. The deformation $(\mathcal{C},f)$ can be lifted to a deformation $f'\colon \mathcal{C}'\to D\times_k A$ satisfying $f'\times_A A/I\cong f$.
\end{proposition}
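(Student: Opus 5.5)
Via Proposition \ref{prop:eext1}, it suffices to show that the obstruction group $\EExt^2(L^\bullet_{C/D},\OO_C)$ vanishes. The obstruction to extending $f$ to $f'$ lies in $\EExt^2(L^\bullet_{C/D},\OO_C)\otimes_k I$, and flatness and finiteness of any lift are already handled in the proof of that proposition.

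First I would identify the cotangent complex. Both $C$ and $D$ are smooth over $k$, so $\pi$ is a morphism of smooth varieties, and hence a local complete intersection morphism. Therefore $L^\bullet_{C/D}$ is perfect of amplitude $[-1,0]$ and is represented by the two-term complex
\[
  \pi^*\Omega_{D/k} \xrightarrow{\ \der\pi\ } \Omega_{C/k},
\]
with $\pi^*\Omega_{D/k}$ in degree $-1$. This follows from the transitivity triangle $\pi^*L_{D/k}\to L_{C/k}\to L_{C/D}$ together with $L_{C/k}\simeq\Omega_{C/k}$ and $L_{D/k}\simeq\Omega_{D/k}$, both locally free of rank one. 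Because $\pi$ is generically étale, $\der\pi$ is injective: it is a map of invertible sheaves that is nonzero at the generic point. Hence $L^\bullet_{C/D}$ is quasi-isomorphic to its $H^0$, namely $\Omega_{C/D}$. This is a torsion sheaf supported on the finite ramification locus $R$, since $\Omega_{C/D}\cong\Omega_{C/k}\otimes(\pi^*\Omega_{D/k})^{-1}|_{R}$ is the structure sheaf of the ramification divisor twisted by a line bundle.

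Next I would compute $\EExt^2(\Omega_{C/D},\OO_C)$ using the local-to-global spectral sequence
\[
  E_2^{a,b}=H^a\bigl(C,\mathcal{E}xt^b(\Omega_{C/D},\OO_C)\bigr)\Rightarrow \EExt^{a+b}.
\]
Alternatively, one can apply $\mathrm{R}\Hom(-,\OO_C)$ to the two-term locally free complex, which gives the long exact sequence
\[
  \cdots\to H^1(C,T_C)\to H^1(C,\pi^*T_D)\to \EExt^2(L^\bullet_{C/D},\OO_C)\to H^2(C,T_C)=0.
\]
The map $H^1(C,T_C)\to H^1(C,\pi^*T_D)$ is surjective. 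To see this, note that its cokernel maps into $H^1$ of the cokernel sheaf of $T_C\to\pi^*T_D$, and that cokernel sheaf is supported on the finite set $R$. Skyscraper sheaves have no $H^1$, and $H^2(C,T_C)=0$ because $C$ is a curve. So the relevant segment is $H^1(T_C)\to H^1(\pi^*T_D)\to H^1(\mathcal{Q})=0$, where $0\to T_C\to\pi^*T_D\to\mathcal{Q}\to0$ and $\mathcal{Q}$ is finitely supported. Hence $\EExt^2(L^\bullet_{C/D},\OO_C)=0$.

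In the spectral-sequence form the argument is even more direct. Every term with $a\ge 2$ vanishes because $C$ has dimension one. Every term with $a\ge1$ and $b\ge1$ vanishes because $\mathcal{E}xt^b(\Omega_{C/D},\OO_C)$ is supported on finitely many points. Finally $\mathcal{E}xt^2=0$, since $\Omega_{C/D}$ has a two-term locally free resolution.

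The main obstacle is justifying the identification of $L^\bullet_{C/D}$ with $[\pi^*\Omega_D\to\Omega_C]$ in Illusie's sense over an arbitrary field $k$. Smoothness of $C$ and $D$ over $k$ makes $\pi$ lci, even in positive characteristic, so this should go through. It also needs generic étaleness so that $\der\pi$ is injective, and that is exactly where the generically étale hypothesis on the cover is used. With the vanishing of $\EExt^2$ in hand, Proposition \ref{prop:eext1} gives that the set of lifts is a nonempty torsor, which yields $f'$.
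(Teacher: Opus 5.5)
Your proposal is correct and follows the paper's route. Like the paper, you use Proposition \ref{prop:eext1} to reduce to the vanishing of $\EExt^2(L^\bullet_{C/D},\OO_C)$ and note that a morphism of smooth curves is a local complete intersection. The only difference is that you prove this vanishing yourself, whereas the paper cites \cite[Proposition 2.1]{Schroer} for it: you identify $L^\bullet_{C/D}$ with the torsion sheaf $\Omega_{C/D}$ using generic \'etaleness, then apply the local-to-global spectral sequence on a curve.
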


\begin{proof}
    By Proposition \ref{prop:eext1}, it suffices to show that
    $\EExt^2(L^\bullet_{C/D},\OO_C)=0$. Under the assumption that
    $\pi$ is a locally complete intersection morphism, this is shown in
    \cite[Proposition 2.1]{Schroer}. A finite type morphism of regular
    Noetherian schemes is locally complete intersection
    \cite[Chapter 6, Example 3.18]{Liu}, hence a cover of smooth curves
    over a field is locally complete intersection.
\end{proof}

Working over an algebraically closed field, Schr{\"o}er showed that first-order deformations are determined by local first-order deformations around branch points:

\begin{lemma}\label{lem:Ext_decomp}
    Let $\pi\colon C\to D$ be a cover of smooth curves over an algebraically closed field $k$ of characteristic $0$ with ramification points $a_i$, writing $b_i=\pi(a_i)$. We have an isomorphism
    \begin{equation}
         \EExt^1(L^\bullet_{C/D}, \OO_{C})  \cong \bigoplus_i \Ext^1\bigl(
        \Omega^1_{\widehat{\OO}_{C,a_i}/
        \widehat{\OO}_{D,b_i}},
        \widehat{\OO}_{C,a_i}
    \bigr).\end{equation} 
\end{lemma}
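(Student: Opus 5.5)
The plan is to compute $\EExt^1(L^\bullet_{C/D},\OO_C)$ through the local-to-global spectral sequence for hyper-Ext,
\[
E_2^{p,q}=H^p\bigl(C,\mathcal{E}xt^q(L^\bullet_{C/D},\OO_C)\bigr)\Rightarrow \EExt^{p+q}(L^\bullet_{C/D},\OO_C).
\]
The first step is to identify the cotangent complex. Since $\pi$ is a finite morphism of smooth curves, it is locally complete intersection (as in the proof of Proposition \ref{prop:def_lift}). Hence $L^\bullet_{C/D}$ is perfect of amplitude $[-1,0]$ and is quasi-isomorphic to the two-term complex
\[
[\pi^*\Omega^1_D\xrightarrow{\der\pi}\Omega^1_C],
\]
both terms being line bundles. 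Because $\pi$ is generically étale, and $\charac k=0$ gives separability, the map $\der\pi$ is injective. So $L^\bullet_{C/D}\simeq\Omega^1_{C/D}[0]$, where $\Omega^1_{C/D}$ is a torsion sheaf supported on the ramification points $a_i$. Locally at $a_i$ it is $\OO_{C,a_i}/(z^{e_i-1})$, with $z$ a uniformiser.

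The second step is to compute the local Ext sheaves. We have $\mathcal{E}xt^0(\Omega^1_{C/D},\OO_C)=\mathcal{H}om(\text{torsion},\OO_C)=0$. The two-term locally free resolution above gives $\mathcal{E}xt^q=0$ for $q\ge2$. The sheaf $\mathcal{E}xt^1(\Omega^1_{C/D},\OO_C)$ is a skyscraper supported on $\{a_i\}$. Since the $E_2$-page is therefore concentrated in the single row $q=1$, the spectral sequence degenerates and gives
\[
\EExt^1(L^\bullet_{C/D},\OO_C)\cong H^0\bigl(C,\mathcal{E}xt^1(\Omega^1_{C/D},\OO_C)\bigr)=\bigoplus_i \mathcal{E}xt^1(\Omega^1_{C/D},\OO_C)_{a_i}.
\]
Here the $H^p$ with $p>0$ of a skyscraper sheaf vanish.

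The final step is to pass from stalks to completions. Stalks of $\mathcal{E}xt$ of coherent sheaves are Ext over the local ring, so the $a_i$-summand is $\Ext^1_{\OO_{C,a_i}}(\Omega^1_{C/D,a_i},\OO_{C,a_i})$. This module has finite length, and completion is flat. Also $\Omega^1$ of a finite-type map commutes with completion, giving $\Omega^1_{C/D,a_i}\otimes\widehat{\OO}_{C,a_i}\cong\Omega^1_{\widehat{\OO}_{C,a_i}/\widehat{\OO}_{D,b_i}}$, where we take the module of continuous differentials, or equivalently use that the module is already finite length. Together these give the isomorphism with $\Ext^1\bigl(\Omega^1_{\widehat{\OO}_{C,a_i}/\widehat{\OO}_{D,b_i}},\widehat{\OO}_{C,a_i}\bigr)$.

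The main obstacle is this last identification. The completed differential module has to be read as continuous or separated differentials, since the naive Kähler module of a complete local ring is huge. I would handle this by writing the map explicitly, $\widehat{\OO}_{D,b_i}=k[[u]]\to k[[z]]$ with $u\mapsto z^{e_i}\cdot(\text{unit})$, and computing the module directly as $k[[z]]/(z^{e_i-1})$. Both sides are then $\Ext^1(k[[z]]/(z^{e_i-1}),k[[z]])\cong k[[z]]/(z^{e_i-1})$, matching the stalk computation.
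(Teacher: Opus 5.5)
Your proposal is correct and follows the same route as the paper: identify $\EExt^1(L^\bullet_{C/D},\OO_C)$ with $\Ext^1(\Omega^1_{C/D},\OO_C)$, then use that $\Omega^1_{C/D}$ is supported at the $a_i$ to split this group into completed local summands; your local-to-global spectral sequence just writes out the step that the paper takes from Schr\"oer's Proposition 2.2. The obstacle you flag in the last step is milder than you suggest, because $\widehat{\OO}_{C,a_i}$ is finite over $\widehat{\OO}_{D,b_i}$, so the ordinary relative K\"ahler module is already finitely generated and equals $\Omega^1_{C/D,a_i}\otimes\widehat{\OO}_{C,a_i}$ without any appeal to continuous differentials.
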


\begin{proof}
    See \cite[Proposition 2.2]{Schroer}. Note that we have
    \[ \EExt^1(L^\bullet_{C/D},\OO_C) \cong \Ext^1(\Omega^1_{C/D},\OO_C). \]
    Because $\Omega^1_{C/D}$ is supported at the ramification points $a_i$, this Ext group decomposes as the direct sum of completed local contributions:
    \[ \Ext^1(\Omega^1_{C/D},\OO_C) \cong \bigoplus_i \Ext^1\bigl(\Omega^1_{\widehat{\OO}_{C,a_i}/\widehat{\OO}_{D,b_i}},\widehat{\OO}_{C,a_i}\bigr). \]
    Under this identification, the class of a first-order deformation of $\pi$ restricts, in each summand, to the class of the induced first-order deformation of the completed germ.
\end{proof}

Next, we describe the local summands of Lemma \ref{lem:Ext_decomp} in terms of explicit germ deformations:

\begin{lemma}\label{lem:ext_description}
    Let $k$ be an algebraically closed field of characteristic $0$, and let $\pi\colon C\to D$ be a cover of smooth curves over $k$.  Let $a\in C(k)$ be a ramification point of index $e$, and put $b=\pi(a)$. Given uniformisers $x\in \widehat{\OO}_{C,a}$ and $u\in \widehat{\OO}_{D,b}$,
    identify their completed local rings with
    \[
        \widehat{\OO}_{C,a}\simeq k[[x]]
        \qquad \text{and} \qquad
        \widehat{\OO}_{D,b}\simeq k[[u]],
    \]
    respectively. After suitable choices of $u,x$, the completed local map induced by $\pi$ has the form
    \[
        k[[u]]\to k[[x]],
        \qquad
        u\mapsto x^e.
    \]
    We then have
    \[
        \Ext^1\bigl(\Omega^1_{\widehat{\OO}_{C,a}/\widehat{\OO}_{D,b}},\widehat{\OO}_{C,a}\bigr) \cong \Ext^1
        \left(
            \Omega^1_{k[[x]]/k[[u]]},
            k[[x]]
        \right)
        \cong
        k[[x]]/(x^{e-1}).
    \]
    Moreover, under this identification, the first-order deformation of the
    completed local map given by
    \[
        u=x^e+t\cdot g(x),
        \qquad
        t^2=0,
    \]
    corresponds to the class $[g(x)]\in k[[x]]/(x^{e-1})$.
\end{lemma}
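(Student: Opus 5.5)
The proof has three parts: put the local map in the normal form $u\mapsto x^e$, compute $\Ext^1$ from an explicit two-term free resolution of $\Omega^1$, and match a first-order deformation with its class in $\Ext^1$ via the boundary map.

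\textbf{Normal form.} Since $\pi$ is ramified of index $e$ at $a$, any uniformiser $u$ at $b$ pulls back to $u = x^e\cdot w(x)$ with $w(0)\neq 0$. Because $k$ is algebraically closed of characteristic $0$, Hensel's lemma gives an $e$-th root $w^{1/e}\in k[[x]]^\times$. Replacing $x$ by the uniformiser $x\,w^{1/e}$ gives $u\mapsto x^e$.

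\textbf{Computing $\Ext^1$.} Write $R=k[[x]]$. In the complete setting, $\Omega^1_{R/k[[u]]}$ is the cokernel of
\[
    R\,du\longrightarrow R\,dx,\qquad du\mapsto ex^{e-1}dx .
\]
Since $e$ is invertible in $k$, this gives a free resolution
\[
    0\to R\xrightarrow{\;\cdot ex^{e-1}\;} R\to \Omega^1_{R/k[[u]]}\to 0,
\]
and the first map is injective because $R$ is a domain. Applying $\Hom(-,R)$, the group $\Ext^1$ is the cokernel of multiplication by $ex^{e-1}$ on $R$. Hence $\Ext^1\cong R/(x^{e-1})$, which is the claimed identification.

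\textbf{Identifying the class.} This is the step needing care, mainly to pin down the normalisation. Here I would use the standard description from Illusie/Schr\"oer: for a morphism of smooth germs, a first-order deformation of the map $R_D\to R_C$ is a deformation of the map, with source and target deformed trivially. Such a deformation $u\mapsto x^e+t\,g$ is a $k[[u]]$-derivation-type datum, namely an element $g\in\Hom(R\,du,R)=R$. Its class in $\Ext^1$ is the image of $g$ under the connecting map $\Hom(R\,du,R)\to\Ext^1(\Omega^1,R)$, which is exactly the quotient map $R\to R/(ex^{e-1})$.

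Two ambiguities must be checked to vanish in this quotient. First, reparametrising $x\mapsto x+t h(x)$ on the source changes $g$ by $ex^{e-1}h$, so $g$ is only defined modulo $(x^{e-1})$. Second, changing $u$ by $t\cdot(\text{unit})\cdot u$ changes $g$ by a multiple of $x^e$, which already lies in $(x^{e-1})$. So the class is $[g(x)]$. The constant $e$ is a unit and can be absorbed into the isomorphism.

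The main obstacle is justifying that the Illusie obstruction-theoretic $\Ext^1$ really coincides with this naive "deform the map modulo reparametrisation" description. For a morphism of formally smooth complete local rings the cotangent complex is the two-term complex $[R\,du\to R\,dx]$. I would rely on this and on \cite[Proposition 2.2]{Schroer}, which makes precisely this identification.
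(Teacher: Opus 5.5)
Your proposal is correct and follows the paper's proof closely. Both arguments put the germ in the normal form $u\mapsto x^e$, which you justify explicitly by taking a Hensel $e$-th root of the unit $w$, and both compute $\Ext^1$ as the cokernel of multiplication by $ex^{e-1}$ from the two-term resolution. Both then identify the class of $u=x^e+t\,g(x)$ with $[g]$ through the standard Illusie/Schr\"oer cotangent-complex description, checking that $x\mapsto x+t\,h(x)$ changes $g$ only by $ex^{e-1}h$. Your extra check on target reparametrisations is harmless but unnecessary, since the target $D\times_k A$ is fixed in this deformation problem.
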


\begin{proof}
    Smoothness guarantees that the completed local rings at $a$ and $b$ are complete discrete
    valuation rings with residue field $k$,  hence isomorphic to formal power-series rings
    \[
        \widehat{\OO}_{C,a}\simeq k[[x]],
        \qquad
        \widehat{\OO}_{D,b}\simeq k[[u]],
    \]
    where \(x\) and \(u\) are uniformisers (see, for example, \cite[Chapter 4, Proposition 2.27]{Liu}). Moreover, the ramification index \(e\) means that the pullback of $u$ vanishes to order $e$ at $a$. Because $k$ is algebraically closed and $\charac{k}=0$, the extension of complete discrete valuation rings is tamely ramified and we may choose $u$, $x$, such that the completed local map is given by
    \[
        k[[u]]\to k[[x]],\qquad u\mapsto x^e.
    \]
    For this map, $\Omega^1_{k[[x]]/k[[u]]}=k[[x]]\,dx/(d(x^e))$. Because $d(x^e)=e x^{e-1}dx$, we have
    \[
        \Omega^1_{k[[x]]/k[[u]]}
        \simeq
        k[[x]]/(x^{e-1})\,dx.
    \]

    Consider the short exact sequence
    \[
        0\to k[[x]]
        \xrightarrow{x^{e-1}}
        k[[x]]
        \to
        k[[x]]/(x^{e-1})
        \to 0.
    \]
    Applying $\Hom_{k[[x]]}(-,k[[x]])$ and using that $k[[x]]$ is free over itself, we identify the first Ext group with the cokernel of multiplication by $x^{e-1}$. Hence
    \[
        \Ext^1_{k[[x]]}
        \bigl(
            \Omega^1_{k[[x]]/k[[u]]},
            k[[x]]
        \bigr)
        \cong
        k[[x]]/(x^{e-1}).
    \]

    Finally, we identify the first-order deformation class. A first-order deformation
    \[
        u=x^e+t\cdot g(x),\qquad t^2=0,
    \]
    is obtained from the trivial deformation by perturbing the defining equation of the completed local map. Conversely, after choosing a lift of the parameter $x$, every first-order deformation of the completed germ has this form: as the completed smooth germ $k[[x]]$ is formally smooth over $k$, the source is identified with $\bigl(k[t]/(t^2)\bigr)[[x]]$, and the image of $u$ is a lift of $x^e$, hence is uniquely of the form $x^e+t\cdot g(x)$. The assignment $g\mapsto(u=x^e+t\cdot g(x))$ is additive in $g$ and sends $g=0$ to the trivial deformation. By the standard cotangent-complex description of first-order deformations, compatible with the torsor structure of Proposition \ref{prop:eext1}, this identifies the deformation class with the class of $g(x)$ in the cokernel of multiplication by $\partial_x(x^e-u)=e x^{e-1}$, namely with $[g(x)]\in k[[x]]/(x^{e-1})$. Consistently, an infinitesimal coordinate change $x\mapsto x+t\cdot h(x)$ changes $x^e$ to $x^e+t\cdot e x^{e-1}h(x)$, and so changes $g(x)$ precisely by an element of $(x^{e-1})$.
\end{proof}

To work over non-algebraically closed fields, we will need the following compatibility result.

\begin{lemma}\label{lem:ext_bc}
    Let $k$ be a field and let $\pi\colon C\to D$ be a cover of smooth curves over $k$. There is a natural isomorphism
    \[
        \EExt^1
        \bigl(L^\bullet_{C/D},\OO_C\bigr) \otimes_k\bar k
        \cong
        \EExt^1
        \bigl(
            L^\bullet_{C_{\bar k}/D_{\bar{k}}},
            \OO_{C_{\bar k}}
        \bigr).
    \]
\end{lemma}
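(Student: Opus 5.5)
The plan is to reduce the statement to flat base change for coherent cohomology. First, I identify the hyper-Ext group with an ordinary Ext group of a coherent sheaf, as in the proof of Lemma \ref{lem:Ext_decomp}. Since $\pi$ is a finite morphism of smooth curves, it is a local complete intersection morphism (see the proof of Proposition \ref{prop:def_lift}). It is also generically étale, so $L^\bullet_{C/D}$ is quasi-isomorphic to a two-term complex $[\mathcal{I}/\mathcal{I}^2\to \Omega^1_{P/D}|_C]$ of locally free sheaves, obtained from a local embedding into a smooth $D$-scheme $P$. Equivalently, it is $[\pi^*\Omega^1_D\to\Omega^1_C]$. This map is injective because $\pi$ is generically étale and both sheaves are line bundles on an integral curve. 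Hence $L^\bullet_{C/D}\simeq \Omega^1_{C/D}[0]$, so
\[
\EExt^1(L^\bullet_{C/D},\OO_C)\cong \Ext^1_{\OO_C}(\Omega^1_{C/D},\OO_C),
\]
and likewise over $\bar k$. The cotangent complex is compatible with the flat base change $\Spec\bar k\to\Spec k$, that is, $L^\bullet_{C_{\bar k}/D_{\bar k}}\simeq L^\bullet_{C/D}\otimes_k\bar k$ (Illusie). Independently, $\Omega^1_{C_{\bar k}/D_{\bar k}}\cong \Omega^1_{C/D}\otimes_k\bar k$, since Kähler differentials commute with base change. So the identification above is natural on both sides.

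Second, I prove the base change for the Ext group itself. The sheaf $\Omega^1_{C/D}$ is coherent, and $C$ is Noetherian and proper over $k$. The local-to-global spectral sequence $H^p(C,\mathcal{E}xt^q(\Omega^1_{C/D},\OO_C))\Rightarrow \Ext^{p+q}$ involves coherent sheaves $\mathcal{E}xt^q$. These commute with flat base change, because for coherent $\mathcal{F}$ on a Noetherian scheme, $\mathcal{E}xt^q(\mathcal{F},\mathcal{G})\otimes_k \bar k\cong \mathcal{E}xt^q(\mathcal{F}_{\bar k},\mathcal{G}_{\bar k})$: one computes with a locally free resolution and uses that $\bar k$ is flat over $k$. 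Coherent cohomology also commutes with flat base change of the field. The spectral sequences are therefore compatible, and we get a natural map $\Ext^1\otimes_k\bar k\to\Ext^1$ over $\bar k$ that is an isomorphism.

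In fact the situation is simpler than this, and I would exploit it to shorten the argument. Since $\Omega^1_{C/D}$ is a torsion sheaf supported on the finite ramification locus, $\mathcal{E}xt^0(\Omega^1_{C/D},\OO_C)=0$ and $\mathcal{E}xt^1$ is a skyscraper. Hence $\Ext^1=H^0(C,\mathcal{E}xt^1(\Omega^1_{C/D},\OO_C))$. It then suffices to check $H^0$ of a finite-support coherent sheaf against $\otimes_k\bar k$, which is immediate because such a sheaf is a finite-dimensional $k$-vector space of global sections.

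The main obstacle is the careful comparison of the hyper-Ext of the cotangent complex with the Ext of $\Omega^1_{C/D}$, together with the naturality of the base change map. I would handle this via the two-term locally free representative $[\pi^*\Omega^1_D\to\Omega^1_C]$, which visibly commutes with base change. Computing $\EExt$ of a bounded complex of locally free sheaves as hypercohomology of the dual complex $[T_C\to\pi^*T_D]$ then reduces the claim to flat base change for the coherent cohomology of $T_C$ and $\pi^*T_D$ on the proper curve $C$. This route avoids the spectral sequence altogether, and naturality is clear.
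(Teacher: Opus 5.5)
Your proof is correct, and it takes a somewhat more concrete route than the paper's.

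The paper argues abstractly, in four steps:
\begin{enumerate}
    \item since $\pi$ is lci, $L^\bullet_{C/D}$ is perfect;
    \item the cotangent complex commutes with the flat base change $C_{\bar k}\to C$;
    \item $R\mathcal{H}om$ out of a perfect complex commutes with pullback;
    \item flat base change for $R\Gamma$ then gives the result after taking $H^1$.
\end{enumerate}
Your final route is exactly this argument made explicit. The two-term locally free model $[\pi^*\Omega^1_D\to\Omega^1_C]$ is what witnesses perfectness, and its dual $[T_C\to\pi^*T_D]$ is $R\mathcal{H}om(L^\bullet_{C/D},\OO_C)$. Hypercohomology of this dual complex then commutes with flat base change.

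Your main line is genuinely different. You use generic \'etaleness to collapse $L^\bullet_{C/D}\simeq\Omega^1_{C/D}[0]$. Then, because $\mathcal{H}om(\Omega^1_{C/D},\OO_C)=0$ and $\mathcal{E}xt^1(\Omega^1_{C/D},\OO_C)$ has finite support, you identify $\EExt^1$ with the global sections of a finite-length sheaf, where base change is trivial.

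\textbf{What your main line buys.} It is more elementary: it needs neither perfect complexes, nor higher cohomology, nor spectral sequences. It also meshes directly with the identification $\EExt^1\cong\Ext^1(\Omega^1_{C/D},\OO_C)$ used in Lemma \ref{lem:Ext_decomp}.

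\textbf{What it costs.} It depends on generic \'etaleness. This is automatic in characteristic $0$, which is the only case where the lemma is applied. By contrast, the paper's argument and your dual-complex variant work for any finite morphism of smooth curves, including inseparable ones in characteristic $p$, where $\pi^*\Omega^1_D\to\Omega^1_C$ can vanish and $L^\bullet_{C/D}$ is not concentrated in degree $0$.

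You do not need to present all three variants. Either the skyscraper argument (in characteristic $0$) or the dual-complex argument (in general) suffices on its own.
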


\begin{proof}
	As in the proof of Proposition \ref{prop:def_lift}, $\pi$ is locally complete intersection and so its cotangent complex $L^\bullet_{C/D}$ is perfect by \cite[\href{https://stacks.math.columbia.edu/tag/08SL}{\texttt{08SL}}]{Stacks}. Because $\bar{k}$ is flat over $k$, cotangent-complex base change gives
    \[
        Lp^*L^\bullet_{C/D}
        \cong
        L^\bullet_{C_{\bar k}/D_{\bar k}},
    \]
    where $p\colon C_{\bar k}\to C$ is the natural projection (see \cite[\href{https://stacks.math.columbia.edu/tag/08QQ}{\texttt{08QQ}}]{Stacks}). Moreover, because the first argument is perfect, derived Hom commutes with this pullback:
    \[
        Lp^*R\mathcal{H}om_C
        \bigl(L^\bullet_{C/D},\OO_C\bigr)
        \cong
        R\mathcal{H}om_{C_{\bar k}}
        \bigl(
            L^\bullet_{C_{\bar k}/D_{\bar k}},
            \OO_{C_{\bar k}}
        \bigr).
    \]
    Flat base change for cohomology then gives
    \[
    \begin{aligned}
        &R\Gamma
        \bigl(
            C,
            R\mathcal{H}om_C
            (L^\bullet_{C/D},\OO_C)
        \bigr)
        \otimes_k \bar k \cong
        R\Gamma
        \bigl(
            C_{\bar k},
            R\mathcal{H}om_{C_{\bar k}}
            (L^\bullet_{C_{\bar k}/D_{\bar k}},
            \OO_{C_{\bar k}})
        \bigr)
    \end{aligned}
    \]
    by \cite[\href{https://stacks.math.columbia.edu/tag/08IB}{\texttt{08IB}}]{Stacks}. Again by flatness, taking $H^1$ gives the claimed isomorphism.
\end{proof}

\subsection{Main theorem}

We now use the general theory set out above to prove the following theorem:

\begin{theorem}\label{thm:main}
	Let $K/\QQ_p$ be a finite extension and consider a degree $d$ cover $\pi\colon C\to D$ of smooth, projective, geometrically connected curves over $K$. There exists a smooth, geometrically integral $K$-scheme $T$ with
	\begin{enumerate}
		\item a marked point $0\in T(K)$;
		\item a smooth proper morphism $q_T \colon \mathcal{C} \to T$ with geometrically connected fibres;
		\item a finite flat $T$-morphism $\Pi_T\colon \mathcal{C} \to D\times_K T$ of degree $d$ such that the fibre above $0$ is isomorphic to $\pi\colon C\to D$.
	\end{enumerate}
	Moreover, there is a Zariski-open subscheme $T'$ of $T$ consisting of simply branched covers such that every $p$-adic neighbourhood of $0\in T(K)$ contains a point of $T'(K)$.
\end{theorem}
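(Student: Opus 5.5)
The plan follows the strategy outlined in the introduction: build a first-order deformation that separates ramification, lift it to a formal deformation over $K[[t]]$, algebraise, descend to a finite type base, and approximate $p$-adically.

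\emph{Step 1: a first-order deformation defined over $K$.}
Put $V=\EExt^1(L^\bullet_{C/D},\OO_C)$. By Lemma \ref{lem:ext_bc}, $V\otimes_K\bar K$ is identified with the corresponding group for $\pi_{\bar K}$. By Lemmas \ref{lem:Ext_decomp} and \ref{lem:ext_description}, this is $\bigoplus_i \bar K[[x_i]]/(x_i^{e_i-1})$. In the $i$-th local germ $u=x^{e_i}+t\,g_i(x)$, the critical points are cut out by $e_ix^{e_i-1}+t\,g_i'(x)=0$ at first order. For generic $g_i$, say with nonzero linear coefficient, the multiple critical point $a_i$ splits into simple critical points. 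I will also need the critical values to be pairwise distinct, in particular across different $a_i$ lying over the same branch point. This distinctness is a nonvanishing condition on the first-order motion of the critical values, which is polynomial in the coefficients of the $g_i$.

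The set of classes in $V\otimes\bar K$ giving a non-simply-branched deformation of the geometric generic fibre (over $\bar K((t))$) is contained in a proper Zariski-closed subset $Z\subsetneq V\otimes\bar K$. It is defined over $K$ because it is Galois-stable. Since $K$ is infinite, $V(K)\not\subset Z$, so there is a class $\xi\in V$ avoiding $Z$. Using Proposition \ref{prop:eext1} with $A=K[t]/(t^2)$, $I=(t)$, this $\xi$ gives a deformation $\Pi_1\colon\mathcal{C}_1\to D\times_K K[t]/(t^2)$.

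\emph{Step 2: formal lifting, algebraisation and simple branching of the generic fibre.}
By Proposition \ref{prop:def_lift}, applied successively to $K[t]/(t^{n+1})\to K[t]/(t^n)$, we obtain a compatible system of deformations $\Pi_n$. The $\mathcal{C}_n$ are proper and carry the ample bundles $\Pi_n^*\OO_D(\text{ample})$, so Grothendieck's existence theorem algebraises the system to a finite flat $\tilde\Pi\colon\mathcal{X}\to D\times_K K[[t]]$ with special fibre $\pi$. Smoothness and geometric connectedness of the fibres hold near the special fibre and pass to the generic point.

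Simple branching of the generic fibre comes from the first-order term. I will work with the branch divisor in $\operatorname{Sym}^N D$, where $N=2g(C)-2-d(2g(D)-2)$. The locus of non-simply-branched covers is closed. The map $\Spec K[[t]]\to\operatorname{Sym}^N D$ has tangent vector at $t=0$ transverse to the relevant strata, by the choice of $\xi\notin Z$. Hence the generic point does not lie in that closed locus, and the generic fibre is simply branched.

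\emph{Step 3: descent to finite type and Greenberg approximation.}
Write $K[[t]]$ as a filtered colimit of finitely generated $K[t]$-subalgebras. By standard limit arguments (EGA IV \S8), $\mathcal{X}$ and $\tilde\Pi$ descend to $\mathcal{C}\to D\times T_0$ over some finite type $T_0$ with a map $\Spec K[[t]]\to T_0$; the closed point of $\Spec K[[t]]$ maps to a point $0\in T_0(K)$. I then shrink $T_0$ to a smooth, geometrically integral open neighbourhood $T$ of $0$ on which the map is finite flat of degree $d$ and smooth proper with geometrically connected fibres. To arrange smoothness and geometric integrality, I take the reduced irreducible component through the image of the generic point, and if necessary replace $T$ by a smooth neighbourhood; this is an expected technical point.

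Let $T'\subset T$ be the open locus of simply branched fibres. It is open because simple branching is an open condition on the branch divisor. It is nonempty because the generic point of $\Spec K[[t]]$ maps into it.

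This last step is the main obstacle. Consider the $K[[t]]$-point $\gamma$ of $T$ whose generic point lies in $T'$. Let $F$ be the closed complement $T\setminus T'$. Then $\gamma^{-1}(F)$ is cut out by an ideal $(t^m)$ for some finite $m$. By Greenberg's theorem, for each $n$ there is $N'$ such that any $K[[t]]$-point gives rise to $K$-points of $T$ agreeing with it to order $n$. Concretely, I will substitute $t=\varpi^j$ into a polynomial truncation of $\gamma$ and then correct it to an honest point using Greenberg's theorem in its $\mathcal{O}_K$-form, after spreading $T$ out over $\mathcal{O}_K$. The resulting $K$-points $P_j$ converge to $0$. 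For large $j$ the equations of $F$ have valuation about $mj$ at $P_j$, strictly less than the approximation precision, so they are nonzero and $P_j\in T'(K)$.

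An alternative that avoids Greenberg: since $T$ is smooth and irreducible and $T'$ is dense open, the $K$-points $T(K)$ form a $K$-analytic manifold near $0$, and $F(K)$ is a closed subset of positive codimension. Hence every $p$-adic neighbourhood of $0$ meets $T'(K)$. I would fall back on this if the Greenberg bookkeeping becomes heavy.
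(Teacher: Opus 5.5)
Your proposal has two genuine gaps.

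\textbf{The generic fibre.} The first gap is in the heart of the theorem: showing that the generic fibre of $\widetilde\Pi$ is simply branched. The closed set $Z$ of Step~1 is only asserted to exist. Moreover, a class $\xi$ does not determine the generic fibre, so you need the conclusion for every formal lift, i.e.\ for arbitrary higher-order terms $t^2H(x,t)$ in the local equation.

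The mechanism you give in Step~2 is false. At first order, a deformation moves each branch value in the cluster over $b$ by $\lambda_i t+o(t)$, where $a_i$ is the ramification point it comes from. So in coordinates $(c_1,\dots,c_m)$ on $\operatorname{Sym}^m$ near $m[b]$, every $c_j$ with $j\ge 2$ is $O(t^2)$ along the arc. Hence the tangent vector of $\Spec K[[t]]\to\operatorname{Sym}^N D$ at $t=0$ is tangent to the stratum of divisors with the same multiplicity pattern as the branch divisor of $\pi$. Whenever $m\ge2$ it lies in the tangent cone $\{c_m=0\}$ of the discriminant, so it is never transverse to it. For example, when $e=3$ the two new branch values are $\lambda t\pm c\,t^{3/2}+O(t^2)$, and the cluster discriminant vanishes to order $3$ along the arc.

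For the same reason, distinctness of critical values within one cluster is not ``a nonvanishing condition on the first-order motion'', since those motions coincide. Your first-order heuristic for splitting the critical points is on the right track. What is actually needed is the Newton polygon argument of Lemma~\ref{lem:local_splitting}. If $\mu\ne0$ then, whatever $H$ is, the critical points are $x_j\sim t^{1/(e-1)}y_j$ with $ey_j^{e-1}+\mu=0$, and the critical values separate at order $t^{e/(e-1)}$ through the distinct numbers $-(e-1)y_j^e$. The condition $\lambda_i\neq\lambda_j$ then separates clusters over a common branch point, and Riemann--Hurwitz (or your degree-$N$ count) excludes new branching.

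\textbf{Smoothness of $T$.} The second gap is smoothness of $T$. Nothing guarantees that the finite type base $T_0$ obtained by spreading out is smooth at $s_0$. Finitely generated subalgebras of $K[[t]]$ can be singular at the closed point: for instance $K[t,tf,f^2]$, with $f\in tK[[t]]$ transcendental over $K(t)$, is a Whitney umbrella. One cannot shrink to a ``smooth neighbourhood'' of a singular point.

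This is exactly where the paper uses Greenberg's theorem, and over a different ring. It approximates $\gamma$ modulo $t^{M+1}$, where $M=\ord_t(\gamma^*h)$ for some $h$ vanishing on the non-simply-branched locus, by a point over the Henselisation $R$ of $K[t]_{(t)}$. It then descends that point to a pointed \'etale neighbourhood $(T,0)\to(\A^1_K,0)$. Then $T$ is automatically a smooth curve, $T\setminus T'$ is finite, and density of $T'(K)$ near $0$ is just the analytic argument you list as your alternative.

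Your $\mathcal{O}_K$-version of Greenberg does yield $K$-points of $T_0'$ accumulating at $s_0$. It does not give the smooth, geometrically integral $T$ that the statement demands. Resolving the singularities of $T_0$ and lifting the arc would be another repair, but you do not supply one.
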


Before proving Theorem \ref{thm:main}, we prove some technical results that will allow us to lift $\pi\colon C\to D$ to a simply branched cover of $D_{K(\!(t)\!)}$. To obtain such a lift, we will iteratively use Proposition \ref{prop:def_lift} to obtain a compatible system. The following will allow us to algebraise such a system to a $K[[t]]$-morphism.

\begin{proposition}\label{prop:algebraise}
    Let $\pi\colon C\to D$ be a degree $d$ cover of smooth, projective, geometrically connected curves over a field $k$ and let $(\Pi_n\colon\mathcal{C}_n\to D_{A_n})_n$ be a compatible system of deformations of $\Pi_0 = \pi$, where $A_n=k[t]/(t^{n+1})$. This system algebraises to a finite flat morphism
    \[
    \widetilde\Pi\colon\mathcal{X}\to D_{k[[t]]}
    \]
    of degree $d$ whose reduction modulo $t^{n+1}$ is $\Pi_n$. Moreover, $q\colon\mathcal{X}\to\Spec k[[t]]$ is smooth and proper with geometrically connected fibres.
\end{proposition}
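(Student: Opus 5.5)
The plan is to algebraise in two stages: first the system of curves $(\mathcal{C}_n)_n$ over $\Spf k[[t]]$, then the morphisms $\Pi_n$. Each $\mathcal{C}_n$ is a flat proper $A_n$-scheme, and the maps $\mathcal{C}_{n+1}\times_{A_{n+1}}A_n\cong\mathcal{C}_n$ are isomorphisms, so they form a proper formal scheme $\mathfrak{C}$ over $\Spf k[[t]]$. To algebraise $\mathfrak{C}$ via Grothendieck's existence theorem \cite[Th\'eor\`eme 5.4.5]{EGA_III}, we need a compatible system of ample line bundles. I would build these from the maps $\Pi_n$: choose an ample line bundle $M$ on $D$ and put $\mathcal{L}_n=\Pi_n^*(M\boxtimes\OO_{A_n})$. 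Because $\Pi_n$ is finite, each $\mathcal{L}_n$ is ample, and the system is compatible by construction. Alternatively, since $H^2(C,\OO_C)=0$ for a curve, any ample bundle on $C$ lifts step by step, which gives the same conclusion. Grothendieck existence then yields a projective $k[[t]]$-scheme $\mathcal{X}$ whose formal completion is $\mathfrak{C}$.

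Next I would algebraise the morphism. The $\Pi_n$ are compatible, so they define a morphism of formal schemes $\mathfrak{C}\to\widehat{D_{k[[t]]}}$. By \cite[Th\'eor\`eme 5.4.1]{EGA_III}, applied to $\mathcal{X}$ proper and $D_{k[[t]]}$ separated (the full faithfulness of formal completion on morphisms), this formal morphism comes from a unique $k[[t]]$-morphism $\widetilde{\Pi}\colon\mathcal{X}\to D_{k[[t]]}$ reducing to $\Pi_n$ modulo $t^{n+1}$.

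Then I would check the properties. Since $\mathcal{X}$ is proper over $k[[t]]$, $\widetilde{\Pi}$ is proper. It is also quasi-finite: the closed fibre of $\widetilde\Pi$ is $\pi$, hence finite, and the locus where fibres have positive dimension is closed (upper semicontinuity of fibre dimension). That locus would have to meet the closed fibre over $t=0$, because every closed subset of a scheme proper over $k[[t]]$ meets the special fibre. Hence $\widetilde{\Pi}$ is finite. For flatness I would use the local criterion: $\mathcal{X}$ is flat over $k[[t]]$ because each $\mathcal{C}_n$ is flat over $A_n$, and $\widetilde\Pi\otimes k=\pi$ is flat, so the fibrewise criterion of flatness gives flatness of $\widetilde\Pi$ at points of the closed fibre. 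Openness of the flat locus combined with the same properness argument then extends this everywhere. The degree is locally constant for a finite flat map, so it equals $d$. Smoothness of $q$ holds along the special fibre (flat with smooth fibre $C$), and since the smooth locus is open and $q$ is proper, $q$ is smooth. For geometric connectedness I would show $q_*\OO_{\mathcal{X}}=k[[t]]$: by the theorem on formal functions, $\widehat{q_*\OO_{\mathcal{X}}}=\varprojlim H^0(\mathcal{C}_n,\OO)$. Each $H^0(\mathcal{C}_n,\OO)$ is $A_n$, because $H^0(C,\OO_C)=k$ and flat deformations of the structure sheaf preserve $H^0$; one sees this by induction using $0\to\OO_C\otimes t^n\to\OO_{\mathcal{C}_n}\to\OO_{\mathcal{C}_{n-1}}\to0$ and the surjectivity of constants. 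So $q_*\OO_{\mathcal{X}}=k[[t]]$, and Stein factorisation together with cohomology-and-base-change (or Zariski's connectedness principle) gives geometrically connected fibres, the generic fibre included.

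I expect the main obstacle to be the passage from properties on the special fibre to the whole of $\mathcal{X}$, namely finiteness and flatness of $\widetilde\Pi$ and geometric connectedness of the generic fibre. The key tool throughout is that closed subsets of schemes proper over $\Spec k[[t]]$ meet the closed fibre, and I would set up the argument to use this uniformly.
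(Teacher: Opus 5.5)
Your proposal is correct. Its skeleton is the paper's: Grothendieck existence \cite[Th\'eor\`eme 5.4.5]{EGA_III} for the curves, \cite[Th\'eor\`eme 5.4.1]{EGA_III} for the morphism, and then spreading properties from the closed fibre using properness over $k[[t]]$. The differences lie in how you verify the properties of $\widetilde\Pi$ and $q$.

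The paper first shows $q$ is smooth and then relies on that. Smoothness makes $\mathcal{X}$ regular, so flatness of the finite map $\widetilde\Pi$ onto the regular $D_{k[[t]]}$ comes from miracle flatness. Smoothness also gives geometrically reduced fibres, so the number of geometric connected components is locally constant (Stacks \texttt{0E0N}) and therefore equals $1$ on the connected base.

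You argue differently at both points. You get flatness of $\widetilde\Pi$ from the fibrewise flatness criterion at closed-fibre points, together with openness of the flat locus. You get geometric connectedness from $q_*\OO_{\mathcal{X}}=k[[t]]$, using formal functions and your length count of $H^0(\mathcal{C}_n,\OO_{\mathcal{C}_n})$, followed by Zariski's connectedness theorem. Your route does not depend on first proving smoothness of $q$: it uses only flatness over $k[[t]]$, flatness of $\pi$ and $H^0(C,\OO_C)=k$. It also avoids the dimension equality of local rings that miracle flatness tacitly requires. The paper's route is shorter once smoothness is in hand.

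You also make explicit two points the paper leaves to Schr\"oer and EGA. First, you supply the compatible ample bundles $\Pi_n^*(M\otimes_k A_n)$ needed to invoke Th\'eor\`eme 5.4.5. Second, you prove the algebraised morphism is finite, via properness plus quasi-finiteness from semicontinuity of fibre dimension; the paper simply takes finiteness as part of the algebraisation.
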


\begin{proof}
    Following the proof of \cite[Proposition 3.1]{Schroer}, this system gives a formal scheme with flat morphism $\mathfrak{X}\to \Spf k[[t]]$ and finite formal morphism $\mathfrak{X}\to D\times_k\Spf k[[t]]$. Via Grothendieck algebraisation \cite[Th{\'e}or{\`e}me 5.4.5]{EGA_III}, $\mathfrak{X}$ is the formal completion of a flat projective $k[[t]]$-scheme $\mathcal{X}$. We obtain the finite morphism $\tilde{\Pi}\colon \mathcal{X}\to D_{k{[[t]]}}$ via \cite[Th{\'e}or{\`e}me 5.4.1]{EGA_III}. That the reduction modulo $t^{n+1}$ is $\Pi_n$ follows also.

    In addition to Schr{\"o}er's claims, we also claim that $q\colon \mathcal{X}\to \Spec k[[t]]$ is smooth and has geometrically connected fibres, and that $\tilde{\Pi}$ is flat of degree $d$. Note that $q$ is smooth because the image of the non-smooth locus is closed, but does not contain the closed point because the special fibre, $C$, is smooth. Therefore, $q$ is smooth and proper, so it is flat with geometrically reduced fibres. The number of geometric connected components in a fibre of $q$ is locally constant by \cite[\href{https://stacks.math.columbia.edu/tag/0E0N}{\texttt{0E0N}}]{Stacks}. The fibres of $q$ are then geometrically connected because $\Spec k[[t]]$ is connected and the special fibre is geometrically connected.
    
    Because $q$ is smooth, $\mathcal{X}$ is regular. We also have that $D_{k[[t]]}$ is regular, so miracle flatness (\cite[\href{https://stacks.math.columbia.edu/tag/00R4}{\texttt{00R4}}]{Stacks}) gives that $\tilde{\Pi}$ is flat. That $\tilde{\Pi}$ has degree $d$ follows from flatness, using that the degree of the special fibre is $d$. 
\end{proof}

We now show that we can pick a suitable first-order deformation.

\begin{lemma}\label{lem:good_direction}
	Let $\pi\colon C\to D$ be a cover of smooth projective curves over a field $k$ of characteristic $0$ with $\bar{k}$-ramification points $a_i$ of index $e_i$, with $b_i=\pi(a_i)$. Write $E=\EExt^1(L^\bullet_{C/D}, \OO_{C})$ and $E_{\bar{k}}=E\otimes_k\bar{k}$. We have an isomorphism
    \begin{equation}\label{eqn:Ext_decomp}\tag{$\dagger$} E_{\bar{k}} \cong \bigoplus_i \Ext^1\bigl(
        \Omega^1_{\widehat{\OO}_{C_{\bar{k}},a_i}/
        \widehat{\OO}_{D_{\bar{k}},b_i}},
        \widehat{\OO}_{C_{\bar{k}},a_i}
    \bigr).\end{equation} 
    Moreover, there exists $U\subset E$ with $U(k)\ne\emptyset$ such that $U\times_k\bar{k}=\tilde{U}$, where $\tilde{U}\subset E_{\bar{k}}$ is defined as follows: write elements of $\Ext^1\bigl(
        \Omega^1_{\widehat{\OO}_{C_{\bar{k}},a_i}/
        \widehat{\OO}_{D_{\bar{k}},b_i}},
        \widehat{\OO}_{C_{\bar{k}},a_i}
    \bigr)$ as $\lambda_i +\mu_i \cdot x + \cdots$ via Lemma \ref{lem:ext_description}, using the same choice of $u$ for ramification points above the same branch point. Set $\tilde{U}$ to be the complement of the hyperplanes
    \[ \mu_i = 0\hspace{0.1cm}\text{ if }\hspace{0.1cm}e_i\ge3,\hspace{0.6cm} \lambda_i = \lambda_j\hspace{0.1cm}\text{ if }\hspace{0.1cm}b_i=b_j\hspace{0.1cm}\text{ and }\hspace{0.1cm}i\ne j\]
    in $E_{\bar{k}}$ under the identification \eqref{eqn:Ext_decomp}.
\end{lemma}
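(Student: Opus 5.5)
The isomorphism \eqref{eqn:Ext_decomp} follows by combining two earlier results. Lemma \ref{lem:ext_bc} gives $E_{\bar k}\cong\EExt^1(L^\bullet_{C_{\bar k}/D_{\bar k}},\OO_{C_{\bar k}})$. Since $\charac k=0$, Lemma \ref{lem:Ext_decomp} applies to $\pi_{\bar k}$ and decomposes the right-hand side as the stated direct sum over the $\bar k$-ramification points $a_i$. Lemma \ref{lem:ext_description} then identifies each summand with $\bar k[[x]]/(x^{e_i-1})$. This makes $E_{\bar k}$ a finite-dimensional $\bar k$-vector space whose coordinates include $\lambda_i$ (when $e_i\ge2$) and $\mu_i$ (when $e_i\ge3$), so the hyperplanes defining $\tilde U$ make sense.

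Regard $E$ as the affine space $\mathbb{A}(E)$ over $k$. To descend $\tilde U$, I would show that the finite union $Z$ of hyperplanes removed is stable under $\Gal(\bar k/k)$, which acts on $E_{\bar k}=E\otimes_k\bar k$ through the second factor. By naturality of the isomorphisms in Lemmas \ref{lem:ext_bc} and \ref{lem:Ext_decomp}, $\sigma\in\Gal(\bar k/k)$ sends the summand at $a_i$ to the summand at $\sigma(a_i)$, and it preserves ramification indices and the relation $b_i=b_j$. The coordinates, however, depend on the choices of $x$ and $u$. I would check that the loci $\{\mu_i=0\}$ and $\{\lambda_i=\lambda_j\}$ are intrinsic, i.e.\ independent of these choices:
\begin{itemize}
\item $\lambda_i$ is the image of the class under evaluation at $a_i$, i.e.\ reduction mod $x$. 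Rescaling $u\mapsto cu$ scales all $\lambda_j$ over the same $b$ by the same $c$; this is exactly why a common $u$ is required above each branch point, and it means $\{\lambda_i=\lambda_j\}$ is well defined.
\item Under a coordinate change $x\mapsto \alpha x+\beta x^2+\cdots$ with $u$ normalised so that $u=x^e$, the $x$-coefficient transforms linearly and sends $\mu_i=0$ to $\mu_i'=0$, possibly after adding a multiple of $\lambda_i$. I expect the correction to vanish because $u=x^e$ is forced. If it does not vanish, I would instead define $\mu_i$ via the intrinsic filtration by order of vanishing, modulo $\lambda_i$.
\end{itemize}
Granting this, $\sigma(Z)=Z$. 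A Galois-stable Zariski-closed subset of $\mathbb{A}(E)_{\bar k}$ is defined over $k$ since $k$ is perfect, so $Z=Z_0\times_k\bar k$ for a closed subset $Z_0\subset\mathbb{A}(E)$. Set $U=\mathbb{A}(E)\setminus Z_0$.

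It remains to show $U(k)\ne\emptyset$, and first that $\tilde U\neq\emptyset$, i.e.\ no removed hyperplane is all of $E_{\bar k}$. This holds because each $\mu_i$ with $e_i\ge3$ is a genuine nonzero coordinate, as $x\notin(x^{e_i-1})$, and each $\lambda_i-\lambda_j$ with $i\ne j$ is a nonzero linear form, as the two coordinates lie in different summands. So $Z$ is a finite union of proper hyperplanes, and $Z_0$ is a proper closed subset of $\mathbb{A}(E)$. Since $k$ has characteristic $0$ it is infinite, so $k$-points are Zariski dense in $\mathbb{A}(E)$ and $U(k)\ne\emptyset$.

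I expect the main obstacle to be the Galois-equivariance step: making precise that the coordinate hyperplanes are intrinsic under the noncanonical choices of local coordinates and that the isomorphisms of Lemmas \ref{lem:ext_bc} and \ref{lem:Ext_decomp} are Galois-compatible. A fallback that avoids this is to let $Z$ be the union of all Galois conjugates of the hyperplanes. This set is automatically Galois-stable and is still a finite union of proper hyperplanes, since conjugates of proper subspaces are proper. Its complement is contained in $\tilde U$ and descends to $k$, which suffices for the intended application.
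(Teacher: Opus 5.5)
Your outline follows the paper's proof step for step: Lemma~\ref{lem:ext_bc} plus Lemma~\ref{lem:Ext_decomp} for \eqref{eqn:Ext_decomp}, then independence of the hyperplanes from the choice of parameters, then Galois descent, then $k$ infinite. However, the one step with real content is left unproved. You only say you ``expect the correction to vanish'' in the transformation of $\mu_i$.

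The check is short. Let $u'=\phi(u)=ru+O(u^2)$ and $x'=\psi(x)=sx+O(x^2)$ with $\psi(x)^{e}=\phi(x^{e})$. Then the deformation $u=x^e+t\,g(x)$ becomes
\[
u'=\phi\bigl(x^e+t\,g(x)\bigr)=\psi(x)^{e}+t\,\phi'(x^e)\,g(x)=x'^{e}+t\,\phi'(x^e)\,g(x).
\]
Since $\phi'(x^e)\equiv r \pmod{x^{e}}$, this multiplier is a constant modulo $x^{e-1}$. So the class $[\lambda+\mu x+\cdots]$ becomes
\[
[r\lambda+r\mu x+\cdots]=[r\lambda+(r/s)\mu x'+\cdots].
\]
No multiple of $\lambda$ enters the linear coefficient, so $\{\mu_i=0\}$ is intrinsic. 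The constant $r=\phi'(0)$ depends only on the target parameter at $b_i$. Hence the same computation also covers non-linear changes of $u$ in your argument for $\{\lambda_i=\lambda_j\}$, where you only treated $u\mapsto cu$. This is exactly the paper's displayed computation.

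Neither fallback replaces this check.
\begin{itemize}
\item Defining $\mu_i$ only ``modulo $\lambda_i$'' changes the locus, and the changed locus is useless downstream. Lemma~\ref{lem:local_splitting} needs $\mu_i\neq 0$ itself when $e_i\ge 3$. For example, $u=x^3+t\lambda$ merely translates the branch value and does not split the ramification.
\item Taking the union of all Galois conjugates of the hyperplanes does give a Galois-stable finite union of proper hyperplanes. Its complement descends to $k$ and has $k$-points, which is enough for the proof of Theorem~\ref{thm:main}. But it only yields $U\times_k\bar k\subseteq\tilde U$, not the equality $U\times_k\bar k=\tilde U$ asserted in the lemma.
\end{itemize}
Equality requires knowing that the hyperplanes are already permuted by $\Gal(\bar k/k)$, which is precisely the intrinsic-ness computation above.
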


\begin{proof}
	We have $E_{\bar{k}} \cong \EExt^1(L^\bullet_{C_{\bar{k}}/D_{\bar{k}}}, \OO_{C_{\bar{k}}})$ by Lemma \ref{lem:ext_bc} and so Lemma \ref{lem:Ext_decomp} gives the isomorphism \eqref{eqn:Ext_decomp}. Under this identification, we may describe first-order deformations in the stated way by Lemma \ref{lem:ext_description}.

	We show that the conditions defining $\tilde{U}$ are independent of the choices of parameters $u,x$. Indeed, suppose that $u'$ and $x'$ are another pair of parameters for a ramification point $a_i$ with $u'=x'^{e_i}$. Because these are also uniformisers, we have
    \[ u'=r u+O(u^2), \qquad x'=s x+O(x^2),\]
    with $r=s^{e_i}$. The first-order deformation corresponding to $[\lambda_i +\mu_i \cdot x + \cdots]$ in the original coordinates is then given by
    \[ [r\lambda_i +  r\mu_i\cdot x + \cdots] =
    [r\lambda_i+(r/s)\mu_ix'+\cdots]
    \]
    in the new pair. In particular, the condition $\mu_i\ne 0$ is independent of the choice of parameters. Moreover, if $b_i=b_j$, then the same change of target parameter $u'=r u+O(u^2)$ is used at both points. Hence the condition $\lambda_i\ne\lambda_j$ is also independent of any choices.

    Now, clearly, $\tilde{U}$ is a non-empty Zariski-open subset of $E_{\bar{k}}$ (viewed as an affine space over $\Spec \bar{k}$). Also, $\Gal(\bar{k}/k)$ preserves ramification indices and the property of lying in the same fibre, and so permutes the defined hyperplanes. Therefore, the closed complement is Galois-invariant and faithfully flat descent gives closed $V\subset E$ such that $U = E\setminus V$ has $U\times_k \bar{k}=\tilde{U}$. $E$ is affine over $k$ and $U$ is a non-empty Zariski-open subset, so $U(k)\ne \emptyset$ because $k$ is infinite and $E$ is finite-dimensional.
\end{proof}

The next lemma will show that a choice of first-order deformation corresponding to an element of $U(k)$ as above will be sufficient to guarantee that lifting to $k[[t]]$ will yield a simply branched generic fibre.

\begin{lemma}\label{lem:local_splitting}
    Let $k$ be an algebraically closed field of characteristic $0$, and take $e\ge 2$. Consider a formal deformation of the ramification germ $u=x^e$ of the form
    \[
        u= x^e+t(\lambda+\mu x+a_2x^2+\cdots)+t^2H(x,t)=:F(x,t),
    \]
    with $H(x,t)\in k[[x,t]]$. If $e\ge 3$, then assume $\mu\neq0$. On the geometric generic fibre over $\bar{k(\!(t)\!)}$, the ramification specialising to $x=0$ consists of $e-1$ distinct ramification points of ramification index $2$. Moreover, the corresponding branch values are pairwise-distinct with common $t$-coefficient $\lambda$.
\end{lemma}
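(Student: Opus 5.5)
The plan is to locate the ramification points on the geometric generic fibre as the zeros of $\partial_xF$ near $x=0$, compute their valuations with a Newton polygon, and then read off ramification indices and branch values from explicit leading terms. Write $\Omega=\bar{k(\!(t)\!)}$ with valuation $v$ normalised by $v(t)=1$. Points specialising to $x=0$ are elements $x_0\in\Omega$ with $v(x_0)>0$. The ramification index at such a point is $1+\ord_{x=x_0}\partial_xF(x,t)$, since it is the order of vanishing of $F(x,t)-F(x_0,t)$ at $x_0$. So the task is to study the zeros of
\[
    \partial_xF = e x^{e-1} + t\bigl(\mu + 2a_2x+\cdots\bigr) + t^2\partial_xH(x,t)
\]
in the open unit disc of $\Omega$.

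\emph{Step 1: counting and locating the roots.} Reducing $\partial_xF$ modulo $t$ gives $ex^{e-1}$, which is $x$-distinguished of order $e-1$ because $e\ne0$ in $k$. Weierstrass preparation in $k[[t]][[x]]$ then writes $\partial_xF=(\text{unit})\cdot P(x)$, where $P$ is a distinguished polynomial of degree $e-1$. Hence there are exactly $e-1$ roots with $v>0$, counted with multiplicity.

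For $e\ge3$, the Newton polygon of $\partial_xF$ (equivalently of $P$) has vertices $(0,1)$, coming from $t\mu$ with $\mu\ne0$, and $(e-1,0)$. Every other monomial $t^ix^j$ has $i\ge1,\ j\ge1$ or $i\ge2$, so it lies strictly above the segment joining these vertices. There is therefore a single segment of slope $-1/(e-1)$, and every root satisfies $v(x_0)=1/(e-1)$. Its leading term satisfies $x_0^{e-1}\equiv -t\mu/e$, so
\[
    x_j=\zeta^j c\,t^{1/(e-1)}+\text{(higher order)},
    \qquad c^{e-1}=-\mu/e,\quad \zeta \text{ a primitive }(e-1)\text{-th root of unity}.
\]
These $e-1$ leading terms are pairwise distinct, so the $x_j$ are distinct. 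Consequently every root is simple, i.e. $\ord_{x_j}\partial_xF=1$, and each ramification index equals $2$.

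For $e=2$ there is exactly one root, and it is simple because $\partial_x^2F\equiv2\pmod{\mathfrak m}$. So its index is $2$ and the distinctness claim is vacuous; here no hypothesis on $\mu$ is needed.

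\emph{Step 2: branch values.} Evaluate $F(x_j,t)=t\lambda+\bigl(x_j^e+t\mu x_j\bigr)+t\sum_{m\ge2}a_mx_j^m+t^2H(x_j,t)$. Using $x_j^{e-1}=-t\mu/e+O\bigl(t^{1+1/(e-1)}\bigr)$, one gets
\[
    x_j^e+t\mu x_j = x_j\bigl(x_j^{e-1}+t\mu\bigr)=\tfrac{e-1}{e}\,t\mu\,x_j+\cdots,
\]
which has valuation $1+1/(e-1)$. The remaining terms have valuation at least $\min\bigl(1+2/(e-1),\,2\bigr)$, which is strictly larger. Thus
\[
    F(x_j,t)=\lambda t+\tfrac{e-1}{e}\mu c\,\zeta^j\,t^{e/(e-1)}+(\text{higher order}).
\]
The $t$-coefficient is $\lambda$ for every $j$, and the next coefficients are pairwise distinct because $\mu c\ne0$ and the $\zeta^j$ are distinct. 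Hence the branch values are pairwise distinct.

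\emph{Main obstacle.} The delicate point is the bookkeeping in the error terms: checking that the contributions of $a_mx^m$ and $t^2H$, both in $\partial_xF$ and in $F(x_j)$, never reach the critical valuations. That comparison, $1+2/(e-1)>1+1/(e-1)$ and $2>1+1/(e-1)$ for $e\ge3$, is the step I would verify most carefully. A secondary point is to justify that the roots in $\Omega$ given by Weierstrass preparation do correspond to the geometric points of the generic fibre specialising to $a$. For this I would use the standard identification of the completed local ring at $a$ with $k[[x,t]]$, so that these points are the closed points of $\Spec \Omega[[x]]$-type discs with $v(x)>0$.
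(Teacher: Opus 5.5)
Your proposal is correct and is essentially the paper's proof: Weierstrass preparation to count $e-1$ roots, a single-segment Newton polygon giving $x_j\sim\zeta^jc\,t^{1/(e-1)}$, and substitution into $F$ giving branch values $\lambda t+\tfrac{e-1}{e}\mu c\,\zeta^j t^{e/(e-1)}+\cdots$, where your coefficient equals the paper's $-(e-1)y_j^e$. The differences are minor: you get index $2$ from simplicity of the roots rather than by evaluating $\partial_x^2F$ at $x_j$. You should also justify that each of the $e-1$ leading coefficients occurs exactly once, as the paper does via Hensel's lemma (equivalently, by separability of the residual polynomial $ey^{e-1}+\mu$). Finally, check the $e=2$ branch value $\lambda t+O(t^2)$ separately, since your strict-inequality estimate degenerates there.
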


\begin{proof}
    The ramification points are the solutions to
    \begin{equation}\label{eqn:crit}\tag{$\ddagger$}
        \frac{\partial F}{\partial x}
        =
        e x^{e-1}
        +t(\mu+2a_2x+3a_3x^2+\cdots)
        +t^2\frac{\partial H}{\partial x}
        =
        0.
    \end{equation}
    Modulo $t$, the central expression of \eqref{eqn:crit} is $e x^{e-1}$. Weierstrass preparation over the complete local ring $k[[t]]$ (for example, \cite[Theorem IV.9.2]{Lang}) allows us to write that expression as a unit in $k[[t]][[x]]$ multiplied by a monic polynomial $P(x)\in k[[t]][x]$ of degree $e-1$, whose non-leading coefficients lie in $(t)$. Thus the solutions of \eqref{eqn:crit} specialising to $x=0$ are precisely the roots of $P$, counted with multiplicity.

    First suppose $e\ge3$.  The Newton polygon of this equation has a single lower segment joining $(0,1)$ to $(e-1,0)$: the constant term has $t$-adic valuation $1$ (because $\mu\ne0$), the leading coefficient of $P$ is a unit, and all intermediate coefficients have valuation at least $1$. Hence all $e-1$ roots have valuation $1/(e-1)$.

    Setting $x=t^{1/(e-1)}y$, dividing \eqref{eqn:crit} by $t$ and reducing modulo $t^{1/(e-1)}$, we obtain
    \[
        e y^{e-1}+\mu=0,
    \]
    which has $e-1$ distinct roots. By Hensel's lemma, the critical equation has $e-1$ distinct roots
    \[
        x_j=t^{1/(e-1)}y_j+\text{higher order terms}
    \]
    over $\overline{k((t))}$, where the $y_j$ are the distinct solutions of $e y^{e-1}+\mu=0$. We also have
    \[
        \frac{\partial^2F}{\partial x^2}
        =
        e(e-1)x^{e-2}+O(t)
    \]
    and evaluating at $x_j$ gives
    \[
        e(e-1)x_j^{e-2} + O(t)
    \]
    having valuation $(e-2)/(e-1)$. In particular, the second derivative does not vanish at $x_j$ and the corresponding ramification point has index $2$.

    Substituting $x_j=t^{1/(e-1)}y_j+\cdots$ back into $F$ gives
    \[
        F(x_j,t)
        =
        \lambda t
        +
        t^{e/(e-1)}
        \bigl(y_j^e+\mu y_j\bigr)
        +
        \text{higher order terms}
    \]
    and, using $e y_j^{e-1}+\mu=0$, we have
    \[
        y_j^e+\mu y_j
        =
        -(e-1)y_j^e.
    \]
    The values $y_j$ are distinct; $y_{j'}$ and $y_j$ differ by a non-trivial $(e-1)$-th root of unity for $j'\ne j$. Therefore, the values $-(e-1)y_j^e$ are also distinct, which shows that the critical values in this cluster are pairwise-distinct.

    For $e=2$, the critical
    equation is
    \[
        2x+t(\mu+2a_2x+\cdots)+t^2\frac{\partial H}{\partial x}=0,
    \]
    which has second derivative $2+O(t)$ and a unique solution near $x=0$. This shows that the unique nearby ramification point has index $2$. The corresponding critical value is $\lambda t + O(t^2)$.
\end{proof}

We now show that simple branching is an open condition in families of covers.

\begin{lemma}\label{lem:simply_open}
    Let $S$ be a locally Noetherian scheme over a field $k$ of characteristic $0$. Let $q\colon\mathcal{C}\to S$ and $r\colon\mathcal{D}\to S$ be smooth proper relative curves equipped with a finite locally free $S$-morphism $f\colon\mathcal{C}\to \mathcal{D}$. The set
    \[
        S'=\{s\in S\mid f_s\colon\mathcal{C}_s\to\mathcal{D}_s\text{ is simply branched}\}
    \]
    is Zariski-open in $S$.
\end{lemma}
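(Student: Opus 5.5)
We will show that the complement of $S'$ is closed. The plan is to express simple branching as a condition on the branch divisor and on a second-order ramification locus, and then to use properness to push closed conditions down to $S$.

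First, since $k$ has characteristic $0$ and each $f_s$ is a finite flat map of smooth curves of some fixed degree $d$, the relative differentials $\Omega^1_{\mathcal{C}/\mathcal{D}}$ are supported on a closed subscheme $R\subset\mathcal{C}$, the relative ramification divisor. Here $\der f\colon f^*\Omega^1_{\mathcal{D}/S}\to\Omega^1_{\mathcal{C}/S}$ is a map of line bundles that is injective on every fibre, because each fibre map is generically étale. Hence $R$ is a relative effective Cartier divisor, finite and flat over $S$, of degree $N=2g-2-d(2h-2)$ on each connected piece of $S$. Its pushforward $B=f_*R$, formed as the norm divisor, is a relative effective Cartier divisor on $\mathcal{D}$, also finite flat of degree $N$ over $S$. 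Its fibre is $B_s=\sum_{Q}(e_Q-1)[f_s(Q)]$, exactly as in the proof of Corollary~\ref{cor:conductors}.

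Next, we use that $f_s$ is simply branched if and only if $B_s$ is reduced, that is, étale over $\kappa(s)$. Indeed, $B_s$ is reduced precisely when every branch point has total weight $\sum_{Q\in f_s^{-1}(b)}(e_Q-1)=1$. This says that there is exactly one ramification point above $b$ and that its index is $2$, which is the definition of simple branching.

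The key step is the openness of the étale locus of a finite flat morphism. The set of points of $B$ at which $B\to S$ is not étale is the support of $\Omega^1_{B/S}$, which is closed in $B$. Its image in $S$ is closed, because $B\to S$ is finite and hence proper. Moreover, this image is exactly the set of $s$ with $B_s$ non-reduced, since étaleness of a finite flat map can be tested on fibres, and a finite $\kappa(s)$-scheme is étale if and only if it is geometrically reduced. In characteristic $0$, reduced is the same as geometrically reduced. Therefore $S'$ is the complement of a closed set, and so it is open.

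The main obstacle is justifying that $R$ and $B$ commute with base change to fibres, so that $B_s$ really is the branch divisor of $f_s$. For $R$, this holds because $R$ is cut out by the determinant of $\der f$, and the determinant is compatible with pullback. For $B$, we use that the norm of a relative Cartier divisor under a finite locally free map commutes with base change. An alternative that avoids norms is to argue directly in $\mathcal{C}$. Let $R_2$ be the closed locus where $\der f$ vanishes to order at least $2$, given by a Fitting ideal of $\Omega^1_{\mathcal{C}/\mathcal{D}}$. Let $Z$ be the closed locus in $R\times_{\mathcal{D}}R$, minus the diagonal, consisting of pairs of distinct ramification points in the same fibre. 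The locus where simple branching fails is then the union of the images of $R_2$ and of $Z$ in $S$, both closed by properness. The delicate point in this version is that the complement of the diagonal need not be closed, which we handle using the Fitting ideal of $\Omega^1_{R\times_{\mathcal{D}}R/R}$. For this reason, I would prefer the norm-divisor argument.
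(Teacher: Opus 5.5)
Your norm-divisor argument is correct, and it takes a genuinely different route from the paper.

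\textbf{The paper's route.} The paper stays on $\mathcal{C}$ and imposes the two halves of simple branching separately. It first passes to the open set $U\subset S$ over which the ramification divisor $R\to S$ is étale, which forces every index to be $2$. It then removes the closed image of $W=(R_U\times_{\mathcal{D}_U}R_U)\setminus\Delta_{R_U}$. This $W$ is closed only because $R_U\to U$ is unramified, which makes the diagonal open as well as closed.

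\textbf{Your route.} You push $R$ forward to the branch divisor $B=\mathrm{Nm}_f(R)$ on $\mathcal{D}$. Then both conditions collapse into one: $B_s$ is geometrically reduced, i.e.\ $B\to S$ is étale over $s$, because the weight of $b$ in $B_{\bar s}$ is $d-\#f_s^{-1}(b)$. Openness then follows from a single application of ``the non-étale locus is closed and finite maps are closed''.

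\textbf{Cost of your route.} You need that the norm of a relative effective Cartier divisor under a finite locally free map is again one and commutes with base change. This holds because determinants commute with base change, the norm of a nonzerodivisor is a nonzerodivisor by McCoy's theorem, and the fibrewise criterion then gives flatness. You also need that on geometric fibres the norm is $\sum_Q(e_Q-1)[f_s(Q)]$, which uses tameness exactly as the paper does.

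\textbf{Gain.} In return your argument is shorter. It also matches the branch-divisor map $T\to\operatorname{Sym}^N(\Proj^1_K)$ used in the proof of Corollary~\ref{cor:conductors}: $S'$ is simply the preimage of the locus of reduced divisors.

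\textbf{On your fallback sketch.} It is essentially the paper's proof, but its first step is wrong as stated. Since $\Omega^1_{\mathcal{C}/\mathcal{D}}\cong\Omega^1_{\mathcal{C}/S}|_R$ is invertible on $R$, its Fitting ideals are just $I_R$ and $\OO_{\mathcal{C}}$. They cannot detect where $\der f$ vanishes to order at least $2$. The right locus is the support of $\Omega^1_{R/S}$. The diagonal problem is then handled by first restricting to where $R\to S$ is étale, not via a Fitting ideal.
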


\begin{proof}
    The relative differential gives a morphism of line bundles $df\colon f^*\Omega^1_{\mathcal{D}/S}\to \Omega^1_{\mathcal{C}/S}$. Equivalently, $df$ is a section of the line bundle $\Omega^1_{\mathcal{C}/S}\otimes\bigl(f^*\Omega^1_{\mathcal{D}/S}\bigr)^\vee$. Write $R\subset \mathcal{C}$ for the vanishing of this section. For every geometric point $\bar{s}\to S$, the finite morphism $f_{\bar{s}}$ is generically {\'e}tale, so $df_{\bar{s}}$ does not vanish identically on any irreducible component of $\mathcal{C}_{\bar{s}}$. Thus $R_{\bar{s}}$ is an effective Cartier divisor on $\mathcal{C}_{\bar{s}}$. Because $\mathcal{C}\to S$ is smooth and $R$ is locally principal, the fibrewise criterion for relative effective Cartier divisors \cite[\href{https://stacks.math.columbia.edu/tag/062Y}{\texttt{062Y}}]{Stacks} shows that $R$ is a relative effective Cartier divisor on $\mathcal{C}/S$. In particular, $R\to S$ is flat.

    Each $R_{\bar{s}}$ is finite. Because $R$ is closed in $\mathcal{C}$ and $\mathcal{C}\to S$ is proper, the morphism $R\to S$ is proper with finite fibres, hence finite. Together with the flatness above, this shows that $R\to S$ is finite locally free.

    Write $E\subset R$ for the {\'e}tale locus of $R\to S$, which is open by definition. Because $R\to S$ is finite, the image of the closed subset $R\setminus E$ is closed. Hence
    \[ U= S \setminus \im(R\setminus E\to S) \]
    is open and $R_{U}\to U$ is finite {\'e}tale. Because $R\to S$ is flat and locally of finite presentation, the fibrewise criterion for smoothness \cite[\href{https://stacks.math.columbia.edu/tag/01V9}{\texttt{01V9}}]{Stacks}, applied in relative dimension zero, shows that a geometric point $\bar{s}\to S$ factors through $U$ precisely when $R_{\bar{s}}\to \bar{s}$ is {\'e}tale. At a ramification point of index $e$, the differential $df_{\bar{s}}$ vanishes to order $e-1$. Hence $R_{\bar{s}}\to\bar{s}$ is {\'e}tale precisely when every ramification point of $f_{\bar{s}}$ has ramification index $2$.
    
    It remains to impose the condition that no two ramification points lie in the same branched fibre. To do so, we work in 
    \[R_U\times_{\mathcal{D}_U}R_U=\{(x,y)\in R_U\times_U R_U:\ f(x)=f(y)\}\]
    and consider the space parameterising ordered pairs of distinct ramification points lying in the same fibre of $f$:
    \[ W = \bigl(R_{U}\times_{\mathcal{D}_{U}}R_{U}\bigr) \setminus\Delta_{R_{U}}, \]
    where $\Delta_{R_{U}}$ denotes the diagonal in $R_U\times_U R_U$ restricted to
    $R_{U}\times_{\mathcal{D}_U}R_{U}$. Because $R_{U}\to U$ is finite {\'e}tale, so unramified, the diagonal is open in $R_U\times_{U} R_U$. Intersecting with $R_U\times_{\mathcal{D}_U}R_U$, we see that $\Delta_{R_{U}}$ is open in $R_{U}\times_{\mathcal{D}_U}R_{U}$, and so $W$ is closed in $R_{U}\times_{\mathcal{D}_{U}}R_{U}$. Because $W\to U$ is finite, its image in $U$ is closed. Therefore, the simply branched locus $S'=U\setminus \im(W\to U)$ is open in $S$.
\end{proof}

Finally, we come to the proof of Theorem \ref{thm:main}.

\begin{proof}[Proof of Theorem \ref{thm:main}]
	Fix $\xi\in U(K)\subset \EExt^1(L^\bullet_{C/D},\OO_C)$ as in Lemma \ref{lem:good_direction}. As in Proposition \ref{prop:eext1}, this corresponds to a first-order deformation after setting the origin to be the trivial deformation. By iterative application of Proposition \ref{prop:def_lift}, we lift to a formal deformation over $K[[t]]$: set $A_n = K[t]/(t^{n+1})$ and take $I_n\triangleleft A_{n+1}$ to be $t^{n+1}\cdot K[t]/(t^{n+2})$. This gives a compatible system of deformations $\pi_n\colon \mathcal{C}_n \to D \otimes_K A_n$. By Proposition \ref{prop:algebraise}, we thereby obtain a finite flat morphism $\tilde{\Pi}\colon \mathcal{X}\to D_{K[[t]]}$ of degree $d$ with special fibre $\pi$. Because $q\colon \mathcal{X}\to \Spec K[[t]]$ is smooth and proper with geometrically connected fibres, $C$ and the generic fibre have the same genus.

    The conditions on $\tilde{U}$ as in Lemma \ref{lem:good_direction} ensure that the generic fibre is a simply branched cover of curves over $K(\!(t)\!)$: base changing to $\bar{K}$ and completing at $a_i$, smoothness of $q$
    identifies the completed local map of $\tilde{\Pi}$ with
    $u=F_i(x,t)\in\bar{K}[[x,t]]$, where
    $F_i\equiv x^{e_i}+t(\lambda_i+\mu_ix+\cdots)\pmod{t^2}$ with
    $\lambda_i+\mu_ix+\cdots$ the $i$-th component of $\xi$ under
    \eqref{eqn:Ext_decomp}, by Lemmas \ref{lem:Ext_decomp} and
    \ref{lem:ext_description}. Then, by Lemma \ref{lem:local_splitting}, the condition $\mu_i\ne 0$ ensures that the ramification point $a_i$ splits into $e_i-1$ distinct ramification points of index $2$ with pairwise-distinct branch values in a local cluster. Moreover, each branch value in the cluster attached to $a_i$ has leading term $\lambda_i t$, so the $\lambda_i\ne \lambda_j$ condition ensures that ramification points in the same fibre are separated into different fibres. Using that $C$ and the generic fibre have the same genus, Riemann--Hurwitz guarantees that no additional branching is introduced away from these local clusters: a contribution of $e_i-1$ is replaced by $e_i-1$ separate contributions of $2-1=1$.

	We now descend to a finite type family of covers. Indeed, let $(B_j)_j$ be the direct system of finitely generated $K$-subalgebras of $K[[t]]$, ordered by inclusion. Fixing a sufficiently large index $j_0$ and writing $S=\Spec B_{j_0}$, we have that $\mathcal{X}\to \Spec K[[t]]$ descends to a morphism $q\colon X\to S$ by \cite[\href{https://stacks.math.columbia.edu/tag/01ZM}{\texttt{01ZM}}]{Stacks}, using that the former is finitely presented. Similarly, $\tilde{\Pi}$ descends to a family $\Pi\colon X\to D\times_K S$ with $\Pi$ finite and locally free of degree $d$ (see \cite[\href{https://stacks.math.columbia.edu/tag/06AC}{\texttt{06AC}}]{Stacks}). We may also assume that $q$ is smooth, proper and of relative dimension $1$ by further enlarging $B_{j_0}$ if need be (see \cite[\href{https://stacks.math.columbia.edu/tag/0C0C}{\texttt{0C0C}}, \href{https://stacks.math.columbia.edu/tag/081F}{\texttt{081F}} and \href{https://stacks.math.columbia.edu/tag/0EY2}{\texttt{0EY2}}]{Stacks}).

	Note that $B_{j_0}\hookrightarrow K[[t]] \twoheadrightarrow K$ defines a point $s_0\in S(K)$ such that the fibre over $s_0$ is $\pi\colon C \to D$. Because $C$ is geometrically connected, by again enlarging $B_{j_0}$ if necessary, we may shrink $S$ to a neighbourhood of $s_0$ and thereby further assume that the fibres of $q$ are geometrically connected. 

	Write $S'=\{s\in S \mid \Pi_s\text{ is simply branched}\}$ and note that this is Zariski-open in $S$ by Lemma \ref{lem:simply_open}. Because the generic fibre of $\tilde{\Pi}$ is simply branched, the $K[[t]]$-point $\gamma$ coming from the inclusion $B_{j_0}\hookrightarrow K[[t]]$ is not contained in the Zariski-closed complement $\Delta:=S\setminus S'$. In particular, we can find $h$ vanishing on $\Delta$ such that $\gamma^*h\in K[[t]]$ is non-zero. Write $M=\ord_t(\gamma^*h)$.

	Set $R=K[t]_{(t)}^\mathrm{hens.}$ to be the Henselisation of the localisation of $K[t]$ at $(t)$, so the completion of $R$ is $K[[t]]$. We obtain from $\gamma$ a $K[[t]]$-point of the finite type $R$-scheme $S_R = S\times_K \Spec R$. By Greenberg's approximation theorem \cite[Corollary 1]{Greenberg}, there exists an $R$-point $\gamma_R\colon \Spec R\to S_R$ agreeing with $\gamma$ modulo $t^{M+1}$. Also writing $\gamma_R$ for the induced $K$-morphism $\Spec R\to S_R \to S$, we have that $\gamma_R^*h\ne 0$, so the image under $\gamma_R$ of the generic point of $R$ lies in $S'$.

	$R$ is the filtered colimit of the local rings of pointed {\'e}tale neighbourhoods of $0\in \A^1_K$ (see \cite[Chapter VIII]{Raynaud}). Because $S$, $q$ and $\Pi$ are all finitely presented, $\gamma_R$ descends to a morphism $\gamma_T\colon (T,0) \to (S,s_0)$ from some pointed {\'e}tale neighbourhood $(T,0)\to (\A^1_K,0)$. After restricting to a neighbourhood of $0\in T(K)$, we may assume that $T$ is smooth and geometrically integral. Pulling back $q$ and $\Pi$ under $\gamma_T$, we obtain $q_T$ and $\Pi_T$ as claimed.
	
	Finally, write $T'=\{t\in T \mid \Pi_{T,t}\text{ is simply branched}\}$. Because $\gamma_R^*h\ne 0$, the non-simply branched locus $T\setminus T'$ on $T$ is a proper closed subset. After restricting to a neighbourhood of $0\in T$, we may assume that $T$ is a smooth, geometrically integral curve with $T\to\A^1_K$ étale, and that $T\setminus T'$ is finite. The induced map on $K$-points is a local analytic isomorphism near $0$, and a finite subset of a $p$-adic analytic neighbourhood has empty interior. Therefore, every $p$-adic neighbourhood of $0\in T(K)$ contains a point of $T'(K)$.
\end{proof}

\section{Fixed source perturbations}

In this section, we consider the question: when can we perturb $\pi\colon C\to \Proj^1$ so as to obtain a simply branched cover $\pi'\colon C\to\Proj^1$ from the same curve? In particular, we show that the answer is in the affirmative if $\pi^*\OO_{\Proj^1}(1)$ is very ample.

\begin{lemma}\label{lem:projection}
    Let $k$ be a field of characteristic $0$, and let $C\subset\Proj^r_k$ be a non-degenerate smooth, projective, geometrically connected curve of positive genus. There is a non-empty
    Zariski-open subset
    \[
        \Omega_C\subset\Gr(r-2,\Proj^r_k)
    \]
    such that, for every $\Lambda\in\Omega_C(\bar{k})$, one has $\Lambda\cap C_{\bar{k}}=\emptyset$ and projection from $\Lambda$ induces a simply branched cover $p_\Lambda:C_{\bar{k}}\to\Proj^1_{\bar{k}}$.
\end{lemma}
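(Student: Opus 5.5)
The plan is to use the classical dual-space argument. For a linear subspace $\Lambda\cong\Proj^{r-2}$ disjoint from $C$, the fibres of $p_\Lambda$ are the sections $H\cap C$, where $H$ runs over the pencil of hyperplanes containing $\Lambda$. This pencil is a line $\Lambda^\perp\subset(\Proj^r)^\vee$. The cover $p_\Lambda$ is simply branched exactly when no $H$ in the pencil meets $C_{\bar k}$ with a point of contact order $\ge 3$, and no $H$ in the pencil is tangent to $C_{\bar k}$ at two distinct points. So I would define two incidence loci in $(\Proj^r)^\vee$, show that each has codimension at least $2$, and conclude that a general line $\Lambda^\perp$ misses both.

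\textbf{Step 1 (the bad loci).} Work over $\bar k$. Let $\mathcal{F}\subset(\Proj^r)^\vee$ be the closure of the set of hyperplanes $H$ that have contact order $\ge 3$ with $C$ at some point $a$. Let $\mathcal{B}$ be the closure of the set of hyperplanes tangent to $C$ at two distinct points.
\begin{itemize}
\item For $\mathcal{F}$: at a point $a\in C$ whose osculating plane is a genuine plane, the hyperplanes containing that plane form a $\Proj^{r-3}$. Letting $a$ vary gives dimension at most $r-2$.
\item For $\mathcal{B}$: when the tangent lines $T_aC$ and $T_bC$ span a $\Proj^3$, the hyperplanes containing both form a $\Proj^{r-4}$. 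Letting $(a,b)$ vary in a $2$-dimensional family again gives dimension at most $r-2$.
\end{itemize}
The case $r=2$ is the degenerate version: $\Lambda$ is a point, and the bad points are those lying on $C$, on the finitely many flex tangents, or on the finitely many bitangents. Then $\mathcal{F}\cup\mathcal{B}$ is a finite set of lines in $(\Proj^2)^\vee$, and a general pencil through a general point avoids them.

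\textbf{Step 2 (the main obstacle: ruling out degenerate behaviour in characteristic $0$).} The dimension counts in Step 1 need two facts.
\begin{itemize}
\item Only finitely many points of $C$ are non-generic for the osculating flag. In characteristic $0$ a non-degenerate curve has the classical order sequence $0,1,\dots,r$ at a general point, by Wronskian/Plücker theory, so the exceptional points are finite. At those finitely many points the relevant hyperplanes still form a family of dimension $\le r-2$: each point contributes at most a $\Proj^{r-2}$ (the hyperplanes through its tangent line).
\item When $r\ge3$, a general pair of tangent lines is skew. If all pairs of tangent lines met, then in characteristic $0$ they would all pass through a common point or lie in a common plane. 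Both are impossible: the first because $C$ is not strange in characteristic $0$, the second because $C$ is non-degenerate. Pairs $(a,b)$ with coplanar tangent lines then form a proper closed subset of dimension $\le1$, and these pairs contribute hyperplanes through a plane, a family of dimension $\le (r-3)+1$.
\end{itemize}
Positive genus rules out $C$ being a line, which is the only non-degenerate case where $r$ is too small for these arguments. The dimension $\le r-2$ bound for $\mathcal{F}\cup\mathcal{B}$ is the step I expect to be the most delicate, and I would follow Hartshorne-style arguments on generic projections.

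\textbf{Step 3 (openness and conclusion).} The set of $\Lambda$ with $\Lambda\cap C\neq\emptyset$ is a proper closed subset of $\Gr(r-2,\Proj^r)$, since it has codimension $1$. The set of lines in $(\Proj^r)^\vee$ meeting a closed subset of codimension $\ge2$ is a proper closed subset of the Grassmannian of lines. Via $\Lambda\mapsto\Lambda^\perp$, which identifies $\Gr(r-2,\Proj^r)$ with the Grassmannian of lines in $(\Proj^r)^\vee$, the complement $\tilde\Omega$ of these closed sets is non-empty open.

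For openness and definability over $k$, I would also argue directly via Lemma \ref{lem:simply_open}. Over the open set $W\subset\Gr(r-2,\Proj^r_k)$ of $\Lambda$ disjoint from $C$, projection gives a finite locally free family $C\times W\to\Proj^1_W$, where $\Proj^1_W$ is the projectivised universal quotient. Lemma \ref{lem:simply_open} then shows that the simply branched locus $\Omega_C\subset W$ is Zariski-open and defined over $k$. Finally, $\Omega_C$ is non-empty because $\Omega_C\times_k\bar k\supseteq\tilde\Omega\neq\emptyset$.
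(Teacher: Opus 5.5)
Your strategy is sound, and once completed it gives a hands-on replacement for the paper's appeal to the existence theorem for Lefschetz pencils. The openness half via Lemma \ref{lem:simply_open} is the same as the paper's. You should still say, as the paper does, why $C\times W\to\Proj^1_W$ is finite locally free: it is quasi-finite by non-degeneracy, finite by properness, and flat because source and target are smooth relative curves.

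\textbf{The gap.} It is in the bound $\dim\mathcal B\le r-2$. You split pairs $a\ne b$ into those with skew tangent lines and those with ``coplanar'' ones, and charge the latter with hyperplanes through a plane. This overlooks pairs with $T_aC=T_bC$. Coincident tangent lines span only a line, the hyperplanes through it form a $\Proj^{r-2}$, and your bound survives only if there are finitely many such pairs.

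This is not a formality. If there were a one-dimensional family of such pairs, every hyperplane tangent to $C$ at a general point would be tangent at a second point. Then $\mathcal B$ would be dense in the dual hypersurface and the bound would be false.

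The same hole appears in your case $r=2$, where ``finitely many bitangents'' is exactly this statement for plane curves and is simply asserted. (There, incidentally, $\mathcal F\cup\mathcal B$ is a finite set of \emph{points} of $(\Proj^2)^\vee$, not of lines.) What is needed is the generic injectivity of the Gauss map $a\mapsto T_aC$, and you do not prove it. The rest of your count is fine: the bound for $\mathcal F$, and the strata with skew tangent lines and with distinct meeting tangent lines.

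\textbf{How to close it.} The missing fact holds in characteristic $0$. Suppose $\Gamma\subset C\times C$ were an irreducible curve, not the diagonal, along which $T_aC=T_bC=:L$. Near a general point of $\Gamma$ choose affine-cone lifts $\hat a,\hat b$. They span $\hat L$ because $a\ne b$. Their derivatives also lie in $\hat L$, because $L$ is tangent to $C$ at both points. Hence $(\hat a\wedge\hat b)'=\hat a'\wedge\hat b+\hat a\wedge\hat b'$ lies in $\wedge^2\hat L$, so it is proportional to $\hat a\wedge\hat b$.

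Therefore the map $\Gamma\to\Gr(1,\Proj^r)$, $(a,b)\mapsto L$, has zero differential, so it is constant in characteristic $0$. One projection of $\Gamma$ to $C$ is non-constant, so infinitely many points of $C$ lie on the single line $L$. Then $C$ is a line, contradicting non-degeneracy.

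Alternatively, cite reflexivity of curves in characteristic $0$. It says directly that a general tangent hyperplane is tangent at exactly one point, with contact exactly $2$. That yields both $\dim\mathcal F\le r-2$ and $\dim\mathcal B\le r-2$, and it is precisely the Lefschetz property the paper imports from SGA~7.
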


\begin{proof}
    Let $G^\circ\subset \Gr(r-2,\Proj^r_k)$ be the open subscheme parameterising codimension-two linear subspaces $\Lambda\subset \Proj^r_k$ with $\Lambda\cap C=\emptyset$. Let
    \[
        \mathcal{I}
        =
        \{(\Lambda,H)\in G^\circ\times_k(\Proj^r_k)^\vee : \Lambda\subset H\}.
    \]
    Then $\mathcal{I}\to G^\circ$ is a $\Proj^1$-bundle: its fibre over $\Lambda$ is the pencil of hyperplanes containing $\Lambda$. For every $\Lambda\in G^\circ$, we have $\Lambda\cap C=\emptyset$, so $(P,\Lambda)\in C\times_k G^\circ$ determines a unique hyperplane $H\subset\Proj^r_k$ spanned by $P$ and $\Lambda$. Therefore, $(P,\Lambda)\mapsto (\Lambda,\langle \Lambda,P\rangle)$ defines a morphism
    \[
        p\colon C\times_k G^\circ\to \mathcal{I}.
    \]
    The fibre of $p$ over a point $\Lambda\in G^\circ$ is the usual projection of $C$ from $\Lambda$. Non-degeneracy of $C$ ensures that $p$ is quasi-finite, so finite by properness. Because $C\times_k G^\circ$ and $\mathcal{I}$ are smooth relative curves over $G^\circ$, $p$ is finite locally free. By Lemma \ref{lem:simply_open}, the locus $\Omega_C\subset G^\circ$ parameterising centres $\Lambda$ with projection $p_\Lambda\colon C_{\bar{k}}\to\Proj^1_{\bar{k}}$ simply branched is Zariski-open.

    It remains to show that $\Omega_C(\bar{k})\ne\emptyset$, so we now work over this algebraic closure. The existence theorem for Lefschetz pencils \cite[Expos\'{e} XVII, Th\'{e}or\`{e}me 2.5]{KatzLefschetz} shows that $C\hookrightarrow\Proj^r$ is a Lefschetz embedding, for which we may choose a Lefschetz pencil $M$. That is to say, we obtain a pencil $M$ such that $\Lambda_M=\cap_{H\in M} H$ meets $C$ transversely, the set $\Gamma$ of hyperplanes $H\in M$ meeting $C$ transversely is open and dense in $M$, and every $H\in M\setminus\Gamma$ meets $C$ transversely except at a unique point, where the hyperplane section has an ordinary quadratic singularity. For a curve, the first condition simplifies to $\Lambda_M\cap C=\emptyset$, whilst the last means that any $H\in M\setminus\Gamma$ has exactly one double intersection with $C$. Hence the associated projection
    \[
        p_{\Lambda_M}\colon C_{\bar{k}}\to M\simeq\Proj^1_{\bar{k}},
    \]
    whose fibre above $H\in M$ is $H\cap C_{\bar{k}}$, is simply branched and so $\Lambda_M\in\Omega_C(\bar{k})$.
\end{proof}

\begin{proposition}\label{prop:fixed_source}
    Let $C$ be a smooth, projective, geometrically connected curve of positive genus over a finite extension $K/\QQ_p$, and let $\pi\colon C\to\Proj^1_K$ be a degree $d$ cover of curves. Write $L=\pi^*\OO_{\Proj^1}(1)$ and let
    \[
        V_0=\pi^*H^0(\Proj^1_K,\OO_{\Proj^1}(1))
        \subset H^0(C,L)
    \]
    be the base-point free pencil defining $\pi$. If $L$ is very ample, then every $p$-adic neighbourhood of $V_0$ in $\Gr\bigl(2,H^0(C,L)\bigr)(K)$ contains a base-point free pencil $V$ such that the induced cover $\tilde\pi\colon C\to\Proj(V^\vee)\cong \Proj^1_K$ is simply branched of degree $d$.
\end{proposition}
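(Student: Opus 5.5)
Use the very ample line bundle $L$ to embed $C$ in projective space, so that pencils in $H^0(C,L)$ correspond to codimension-two linear centres of projection. Lemma \ref{lem:projection} then supplies a Zariski-open set of good centres, and $p$-adic density of rational points in a smooth variety finishes the argument.

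Set $W=H^0(C,L)$ and $r+1=\dim W$. The complete linear system gives a closed embedding $\phi_L\colon C\hookrightarrow\Proj(W^\vee)\cong\Proj^r_K$, and the image is non-degenerate. A two-dimensional subspace $V\subset W$ determines the codimension-two linear subspace $\Lambda_V=\{V=0\}\subset\Proj^r_K$. This gives an isomorphism $\Gr(2,W)\cong\Gr(r-2,\Proj^r_K)$ of smooth, geometrically irreducible $K$-varieties. Under it, $V$ is base-point free exactly when $\Lambda_V\cap C=\emptyset$. In that case the induced map $C\to\Proj(V^\vee)$ is the projection $p_{\Lambda_V}$, and it has degree $\deg L=d$, since its fibres are hyperplane sections of $C$, which have degree $\deg L=d$. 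Note that $V_0$ is itself base-point free, so $\Lambda_{V_0}\cap C=\emptyset$.

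Apply Lemma \ref{lem:projection} to $\phi_L(C)$ to obtain the non-empty Zariski-open subset $\Omega_C\subset\Gr(2,W)$. For $V\in\Omega_C(K)$, the pencil is base-point free and $\tilde\pi=p_{\Lambda_V}$ is simply branched after base change to $\bar K$. Simple branching is defined geometrically, so $\tilde\pi$ is simply branched over $K$. Moreover, $\Proj(V^\vee)\cong\Proj^1_K$ because $V$ is a $K$-vector space.

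The remaining step, which I expect to carry the only real content, is to show that $\Omega_C(K)$ meets every $p$-adic neighbourhood of $V_0$. Let $Z=\Gr(2,W)\setminus\Omega_C$. This is a proper Zariski-closed subset of the smooth, geometrically irreducible variety $\Gr(2,W)$, of dimension $N=2(r-1)$.
\begin{enumerate}
\item Take a standard affine chart $\A^N_K$ containing $V_0$. On $K$-points its topology is the $p$-adic topology of $K^N$.
\item $Z$ is a proper closed subset of this irreducible chart, so it lies in the zero locus of some non-zero polynomial $h\in K[x_1,\dots,x_N]$.
\item A non-zero polynomial over the infinite field $K$ cannot vanish identically on a $p$-adic ball, since a product of infinite sets is Zariski-dense in affine space. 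So every ball around $V_0$ in $K^N$ contains a point with $h\neq0$, and such a point lies in $\Omega_C(K)$.
\end{enumerate}
This gives the required $V$. No shrinking or approximation argument of the kind used in Theorem \ref{thm:main} is needed, because the parameter space here is already rational.
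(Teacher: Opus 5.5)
Your proposal is correct and follows essentially the same route as the paper: embed $C$ by the complete linear system $|L|$, identify pencils $V\subset H^0(C,L)$ with codimension-two centres $\Lambda_V$ via Grassmannian duality, invoke Lemma \ref{lem:projection}, and conclude by $p$-adic density of $K$-points of a non-empty Zariski-open subset of an affine chart containing $V_0$. The only difference is that your argument that a non-zero polynomial cannot vanish on a product of infinite subsets of $K$ spells out the final density step, which the paper states in a single line.
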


\begin{proof}
    Write $W=H^0(C,L)$ and $r=\dim_K(W)-1$. By very ampleness, we have a closed immersion $\varphi_L\colon C\hookrightarrow\Proj(W^\vee)$. For a two-dimensional subspace $V\subset W$, define
    \[
        \Lambda_V:=\Proj(\operatorname{Ann}(V))\subset\Proj(W^\vee).
    \]
    Then $V$ is base-point free on $C$ if and only if $\Lambda_V\cap\varphi_L(C)=\emptyset$, in which case the morphism defined by $V$ is the projection of $\varphi_L(C)$ from $\Lambda_V$.

    By Lemma \ref{lem:projection}, a non-empty Zariski-open subset $\Omega$ of codimension-two centres gives finite simply branched projections on $\varphi_L(C)$. Under Grassmannian duality
    \[
        \Gr(2,W)\xrightarrow{\sim}\Gr(r-2,\Proj(W^\vee)),
        \qquad V\mapsto\Lambda_V,
    \]
    its inverse image is a non-empty Zariski-open subset $\Omega'\subset\Gr(2,W)$, in which every pencil is base-point free and defines a simply branched map of degree $d$. Choose an affine chart $A$ containing $V_0$. Because $\Omega'\cap A$ is open in an affine space, defined over $K$ and non-empty over $\bar{K}$, $\Omega'(K)$ meets every $p$-adic neighbourhood of $V_0$.
\end{proof}

\begin{remark}
    In Proposition \ref{prop:fixed_source}, it suffices to assume that $d\ge 2g(C)+1$ because all line bundles of degree at least $2g(C)+1$ on a smooth curve $C$ are very ample \cite[Chapter IV, Corollary 3.2(b)]{Hartshorne}. In this case, however, there is no application to the determination of conductor exponents.
\end{remark}

\begin{example}
    Let $K/\QQ_p$ be a finite extension. For $C\subset \Proj^2_K$ a smooth plane curve of degree at least $3$, projection from a point $P\in \bigl(\Proj^2\setminus C\bigr)(K)$ defines a morphism $\pi_P\colon C\to \Proj^1_K$. Fibres are cut out by lines through $P$, so $\pi_P^*\OO_{\Proj^1}(1)\cong \OO_C(1)$, which is very ample. Consequently, $\pi_P$ can always be perturbed to a nearby simply branched cover.
\end{example}

\bibliographystyle{plain}
\bibliography{main}

\end{document}